\documentclass[12pt]{amsart}
\usepackage{fullpage}
\usepackage{latexsym}
\usepackage{amssymb}
\usepackage[all]{xy}

\newtheorem{thm}{Theorem}[section]
\newtheorem{prop}[thm]{Proposition}
\newtheorem{cor}[thm]{Corollary}

\newtheorem{lem}[thm]{Lemma}

\newtheorem{prob}[thm]{Problem}

\theoremstyle{definition}

\newtheorem{ex}[thm]{Example}

\usepackage{hyperref}

\newcommand{\N}{\mathbb{N}}
\newcommand{\Q}{\mathbb{Q}}
\newcommand{\R}{\mathbb{R}}
\newcommand{\Z}{\mathbb{Z}}

\newcommand{\SO}{\mathrm{\SO}}

\title[A topological interpretation of numbers]
{A topological interpretation of numbers}

\author{Christoforos Neofytidis}
\address{Department of Mathematics and Statistics, University of Cyprus}
\email{neofytidis.christoforos@ucy.ac.cy}

\date{\today}
\subjclass[2010]{55M25}
\keywords{}

\begin{document}

\begin{abstract} 
We survey recent advancements on the realisation problem for mapping degree sets. In particular, we explain that any finite set containing zero is the mapping degree set between some $n$-manifolds, for each $n\geq3$. Our building factors for the construction of the realising manifolds will be aspherical manifolds. Thus, along the way, we review a couple of open questions related to maps of non-zero degree for aspherical manifolds.
\end{abstract}

\maketitle

\section{Introduction} 

Let $M,N$ be two closed oriented $n$-manifolds.
The {\em degree}  of a continuous map $f\colon M\to N$, denoted by $\deg(f)$, is the integer given by $H_n(f)([M])=\deg(f)[N]$, where $H_n(f)$ is the induced homomorphism in top degree homology and $[X]$ denotes the fundamental class. The {\em mapping degree set} or {\em set of degrees of maps} from $M$ to $N$ is a subset of the integers given by
\[
D(M,N):=\{d\in\Z \ | \ \exists \ f\colon M\to N, \ \deg(f)=d\}.
\]

The purpose of this survey is to give a concise account of the following realisation problem:

\begin{prob}[Realisation Problem]\label{p:realization}
Given a set $A\subseteq\Z$, are there $M$ and $N$ such that $D(M,N)=A$?
\end{prob}

While one quickly observes that such a set $A$ must contain zero, Problem \ref{p:realization} is highly non-trivial. In this survey, we will outline some recent advancements, including a complete solution in the case of finite sets. From now on, we assume that any $A$ as in the Realisation Problem \ref{p:realization} contains zero.

\begin{thm}\label{t:main}\
\begin{itemize}
\item[(a)] \cite{NWW} Not all sets are mapping degree sets.
\item[(b)] \cite{NWW} All finite arithmetic sequences are mapping degree sets between $3$-manifolds.
\item[(c)] \cite{CMV} All finite sets are mapping degree sets between $3$-manifolds.
\item[(d)] \cite{NSTWW1,NSTWW2} All finite sets are mapping degree sets between $n$-manifolds, for each $n\geq4$.
\end{itemize}
\end{thm}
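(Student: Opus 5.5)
Part (a) I would get by counting. A closed manifold has only countably many homeomorphism types, and each closed $n$-manifold is homotopy equivalent to a finite CW complex. Hence the pairs $(M,N)$ form a countable set, and so do the sets $D(M,N)$. There are uncountably many subsets of $\Z$ containing $0$, so some of them are not mapping degree sets. This argument gives existence but no explicit example. If I wanted an explicit non-realisable set, I would look for a closure property, such as multiplicativity under composition when $M=N$, or a structural constraint on infinite degree sets. The counting argument already proves (a) as stated.

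For (b), (c) and (d) the plan is to build $M$ and $N$ out of aspherical pieces whose self-maps of non-zero degree are tightly controlled. The first step is to find a target $N$ with $D(N,N)=\{0,\pm1\}$ or $\{0,1\}$. Rigid hyperbolic manifolds work here: by Gromov--Thurston, any map of non-zero degree between hyperbolic manifolds of the same volume has degree $\pm1$. In dimension $3$, mixed JSJ pieces or manifolds without orientation-reversing symmetry would let me remove the $-1$.

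To realise the arithmetic sequence $\{0,k,2k,\dots,mk\}$ in (b), I would take $M$ to be a connected sum of covers of such pieces, or a manifold with a specified collection of hyperbolic pieces. The degree is then constrained because it is additive over the components that map onto $N$, and each component contributes a degree from a restricted set. For instance, degree-$k$ covers contribute $0$ or $\pm k$, with the sign fixed by chirality. Summing $m$ such contributions gives exactly $k\{0,1,\dots,m\}$.

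For a general finite set $A=\{0,a_1,\dots,a_r\}$ in (c) and (d), I would replace the connected sum with a more rigid source. The idea is to choose $M$ as a connected sum $M_1\#\cdots\#M_r$, where $M_i$ is a degree-$a_i$ cover of $N$ (or of a piece of it), or is built from distinct, mutually incomparable aspherical manifolds. The construction should force any map $M\to N$ to be essentially non-zero on only one summand, so that the realised degrees are exactly the $a_i$ and no sums of them occur. I would try to achieve this with a target $N$ that is a product or connected sum of pairwise non-dominating aspherical manifolds $N_1,\dots,N_r$, one for each $a_i$. The source would then be arranged so that a degree-$d$ map must factor through a single $N_i$-type piece with $D=\{0,a_i\}$. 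In dimension $n\ge4$ I would use products of hyperbolic manifolds, or other aspherical manifolds with non-zero simplicial volume, together with the $\pi_1$-rigidity of maps between aspherical manifolds. In dimension $3$ I would use graph manifolds and hyperbolic pieces glued along tori.

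The main obstacle is the upper bound: showing that no unexpected degrees occur. Sums of the $a_i$, and degrees coming from maps that mix the pieces, both have to be excluded. I would first attack this by passing to $\pi_1$. Any non-zero-degree map to an aspherical target is determined up to homotopy by its effect on $\pi_1$ and has finite-index image. So the task reduces to classifying the homomorphisms between the chosen rigid groups and proving that each one has the prescribed index and sign. Simplicial volume or Seifert-volume bounds would then exclude everything else.
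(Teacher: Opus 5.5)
\textbf{Part (a) and the architecture for (b).} Your part (a) is the paper's argument: there are countably many homotopy types of closed $n$-manifolds, hence countably many sets $D(M,N)$, but uncountably many subsets of $\Z$ contain $0$. Your additive architecture for (b) is also the paper's: take connected sums of sources mapping to one aspherical target, so that degrees add by Lemma~\ref{l:degreeconnected}.

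\textbf{The gap in (b): the building block.} You need pairs with $D(M_0,N)=\{0,k\}$ \emph{exactly}, and a $k$-sheeted cover $\tilde N$ of a hyperbolic $N$ is not shown to be one. Gromov--Thurston only gives $|d|\le k$. Nothing excludes maps $\tilde N\to N$ of degree $d$ with $0<|d|<k$. Chirality of $N$ alone does not exclude degree $-k$ either: if $\tilde N$ has an orientation-reversing self-map, composing it with the covering gives $-k$. Volume bounds can never rule out intermediate degrees, yet your last paragraph relies on them to do so. The paper instead uses the circle bundles $K_i$ over a fixed hyperbolic surface with Euler number $i$. For these, $D(K_i,K_j)=\{0,j/i\}$ if $i\mid j$ and $\{0\}$ otherwise (Lemma~\ref{l:degreecircle}). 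Here the Euler number, not the volume, pins down the degree together with its sign. Note also that an arithmetic sequence in (b) means one of integer intervals, possibly on both sides of $0$. This needs summands realising both $\{0,d\}$ and $\{0,-d\}$ (the paper maps $K_{d_i'}$ and $K_{-d_i'}$ into $K_{d'}$), not only $k\{0,1,\dots,m\}$.

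\textbf{Parts (c) and (d): the mechanism fails.} With a single target, the inclusion $D(M_1,N)+D(M_2,N)\subseteq D(M_1\#M_2,N)$ holds unconditionally: collapse the neck sphere to get $M_1\#M_2\to M_1\vee M_2$ and map each wedge summand. So if $a_i\in D(M_i,N)$, every sum of the $a_i$ over distinct indices is a degree on $M_1\#\cdots\#M_r$, and you cannot force a map to be non-zero on only one summand. With a connected-sum target, the degree-one pinches $N_1\#\cdots\#N_r\to N_i$ give $D(M,\#_iN_i)\subseteq\bigcap_iD(M,N_i)$ (Lemma~\ref{connectedsumdegrees}). A map of non-zero degree $d$ therefore has degree $d$ onto \emph{every} $N_i$; it never ``factors through a single piece'', and you get intersections, not unions. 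Such targets are also not aspherical, so your reduction to classifying $\pi_1$-homomorphisms is unavailable. A product of $r$ factors has dimension growing with $r$ and multiplies degrees, so it gives neither a fixed dimension nor a union.

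\textbf{The missing idea for (c).} Exploit intersections. By Proposition~\ref{p:CMV}, every set $\{0,d_1,\dots,d_n\}$ equals $\bigcap_iS_{B_i}$, where $S_B=\sum_{b\in B}\{0,b\}$. Each $S_{B_i}$ is realised as $D(M_i,N_i)$ by connected sums of circle bundles, as in (b). Proposition~\ref{CMV} then realises the intersection as $D((\#_iM_i)\#(\#_l S^{n-1}\times S^1),\#_iN_i)$. The $S^{n-1}\times S^1$ summands make the maps $\pi_1$-surjective, so they can be glued across the connected sum (Lemma~\ref{l:WR}) without creating new degrees.

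\textbf{What (d) additionally needs.} For $n\ge6$ you need the stabilisation $D(M,N)=D(M\times W,N\times W)$ for suitable hyperbolic $W$ of dimension at least $3$ (Theorem~\ref{t:NSTWW}). For $n=4,5$ you need new two-point blocks $\{0,k\}$. The paper builds these from circle bundles over aspherical $3$- and $4$-manifolds $N$ with:
\begin{itemize}
\item $D(N,N)=\{0,1\}$;
\item centre-free finite-index subgroups;
\item a non-torsion class in $H^2$ fixed by every degree-one self-map.
\end{itemize}
The same intersection argument then applies.
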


As we shall see, part (a) of Theorem \ref{t:main} follows from a quick enumeration argument. Part (b) -- in fact, a much more general and involved statement (see Theorem~\ref{partb}) -- encompasses the topology needed and part (c) follows by applying the argument for part (b) to a combinatorial description of any given finite set. Part (d) for $n\geq 6$ follows by part (c) together with a result which roughly says that mapping degree sets in lower dimensions are mapping degree sets in higher dimensions. The cases $n=4,5$ of part (d) require a different approach to be able to apply parts (b) and (c).

Unless otherwise stated, all manifolds are closed, oriented and connected.

\subsection*{Acknowledgments}
I would like to thank Athanase Papadopoulos and Sumio Yamada for their invitation to speak at the Nisyros Geometry Conference 2025, as well as to write this survey for the book {\em Nisyros 2025: Essays in Geometry, History and Philosophy}. This text comprises notes from several lectures I delivered on this topic, mostly related to those at Max Planck Institute for Mathematics Oberseminar, the Nisyros Geometry Conference and at CUNY Graduate Center. I thank the organisers for their invitations and hospitality. Partial support by a start-up fund from the University of Cyprus, as well as by the research grant EXCELLENCE/0925-{\em Hopf problem and monotone invariants} from the Research and Innovation Foundation of Cyprus is gratefully acknowledged.

\section{First examples and motivation}\label{examples}

Given $M$ and $N$, the first natural question arising with the definition of the set $D(M,N)$ is the following:

\begin{prob}
Compute or estimate $D(M,N)$.
\end{prob}

Understanding $D(M,N)$ for various classes of $M$ and $N$ could guide us to a better understanding of the Realization Problem \ref{p:realization}.

\medskip

Clearly, $D(M,N)$ contains zero. The first example that gives us a complete computation in all dimensions is the $n$-sphere $S^n$:

\begin{ex}\label{ex:sphere}
$D(S^n,S^n)=\Z$.
\end{ex}

With a little care, one then obtains the next computation:

\begin{ex}\label{ex:pinch}
$D(M,S^n)=\Z$ for any $n$-manifold $M$. Indeed, observe that $M\simeq M\#S^n$ and pinch to a point $M$. This defines a degree one map, $p\colon M\#S^n\to S^n$, called {\em pinch map}. Then compose with the maps from Example \ref{ex:sphere}
\end{ex}

Having achieved the set $\Z$ using any manifold $M$, now one would like to see the other end, i.e. the set $\{0\}$, as a mapping degree set. This leads us to a first obstruction:

\begin{lem}\label{l:fund}
If $f\colon M\to N$ is a map of non-zero degree, then the induced map on the fundamental groups $f_*\colon\pi_1(M)\to\pi_1(N)$ surjects onto a finite index subgroup of $\pi_1(N)$, i.e. $[\pi_1(N):f_*(\pi_1(M))]<\infty$.
\end{lem}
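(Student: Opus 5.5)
The plan is the standard covering space argument. Let $H=f_*(\pi_1(M))\le \pi_1(N)$, choosing basepoints $x_0\in M$ and $f(x_0)\in N$. Since $N$ is a connected manifold, it is locally path connected and semilocally simply connected. Hence there is a connected covering $p\colon \widetilde N\to N$ with $p_*(\pi_1(\widetilde N))=H$. The number of sheets of $p$ equals the index $k=[\pi_1(N):H]$, possibly infinite. By the lifting criterion, $f$ lifts to a map $\tilde f\colon M\to\widetilde N$ with $p\circ\tilde f=f$, because $f_*(\pi_1(M))\subseteq p_*(\pi_1(\widetilde N))$. The covering $\widetilde N$ is again an $n$-manifold without boundary, and it inherits an orientation from $N$ via $p$.

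Next I split into two cases.

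\emph{Case $k=\infty$.} Then $\widetilde N$ is a connected non-compact $n$-manifold, so $H_n(\widetilde N;\Z)=0$. The map $H_n(f)$ factors as $H_n(p)\circ H_n(\tilde f)$, and $H_n(\tilde f)([M])$ lies in $H_n(\widetilde N)=0$. Therefore $H_n(f)([M])=0$, so $\deg(f)=0$, contradicting the hypothesis. Hence $k<\infty$, which is the claim.

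\emph{Case $k<\infty$.} For completeness, in this case $\widetilde N$ is closed and $\deg(p)=k$. This gives the extra conclusion that $\deg(f)=\deg(\tilde f)\cdot k$, so $k$ divides $\deg(f)$. This refinement is not needed for the statement.

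The only genuinely nontrivial input is the vanishing of top-dimensional integral homology for connected non-compact $n$-manifolds. This is a standard result (e.g.\ Hatcher, Prop.~3.29), which I will quote. A more hands-on alternative avoids it: the image $\tilde f(M)$ is compact, so it misses some point $y\in\widetilde N$. Pick a point $z\in N$ with $k=\infty$ preimages, small evenly covered balls around them, and compare local degrees. Then $\deg(f)$ equals the sum over the preimages $\tilde z_i$ of the local degrees of $\tilde f$ at $\tilde z_i$. One must then argue that this local degree is constant in $\tilde z\in\widetilde N$ by connectedness. Since it is $0$ at points outside the compact image, it vanishes everywhere. I would present the first argument, noting that it is this vanishing of $H_n$ that does the work, and that everything else (existence of the covering and the lift) is routine covering space theory.
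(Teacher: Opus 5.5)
Your proof is correct and takes essentially the paper's approach: the paper gives no proof of its own, only a pointer to [Ne3, Lemma 9.2.9], and lifting $f$ to the cover of $N$ corresponding to $f_*(\pi_1(M))$ and using $H_n=0$ for a connected non-compact $n$-manifold is the standard argument behind that lemma. Your finite-index refinement, that $[\pi_1(N):f_*(\pi_1(M))]$ divides $\deg(f)$, is also correct.
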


For a proof see for example~\cite[Lemma 9.2.9]{Ne3}. In particular we have the following:

\begin{ex}
Let $n>1$. If $M$ is an $n$-manifold with infinite fundamental group, then $D(S^n,M)=\{0\}$.
\end{ex}

With the two extreme cases $\{0\}$ and $\Z$ having been quickly realised as mapping degree sets, we will try to understand how some proper subsets of $\Z$ occur as such sets. Since $D(S^1,S^1)=\Z$, let us look in dimension two: 
Let $\Sigma_g$ be a surface of genus $g$. 
When $g=0$, i.e. $\Sigma_0=S^2$, then $D(S^2,\Sigma_g)$ and $D(\Sigma_g, S^2)$ have been computed above for any $g$, hence we can assume $g\geq 1$ for either the source manifold or the target manifold. When $g=1$, i.e. $\Sigma_1=T^2=S^1\times S^1$, then:
\begin{itemize}
\item $D(\Sigma_g,T^2)=\Z$. This follows by the same argument as in Example \ref{ex:pinch}, since the 2-torus $T^2$ admits self-maps of any degree and then one can compose with the degree one pinch map $\Sigma_g=\Sigma_{g-1}\#T^2\to T^2$.
\item $D(T^2,\Sigma_g)=\{0\}$, if $g\geq2$. While one can apply again Lemma \ref{l:fund}, an alternative way is via homology, since $\dim H_1(T^2;\Q)=2$ and $\dim H_1(\Sigma_2;\Q)=2g$. This is explained in the lemma below (see~\cite[Lemma 9.2.3]{Ne3} for a proof).
\end{itemize}

\begin{lem}\label{l:homol}
If $f\colon M\to N$ is a map of non-zero degree, then the induced map on rational homology groups $H_i(f;\Q)\colon H_i(M;\Q)\to H_i(N;\Q)$ is surjective.
\end{lem}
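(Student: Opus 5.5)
The plan is to use Poincaré duality together with the cap product and the naturality formula $f_*(f^*\alpha\frown[M])=\alpha\frown f_*[M]$. Write $d=\deg(f)\neq 0$, so that $H_n(f)([M])=d[N]$. Work with rational coefficients throughout, so that $d$ is invertible and Poincaré duality holds for both $M$ and $N$ (both closed and oriented).

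First, for $\alpha\in H^{n-i}(N;\Q)$, naturality of the cap product gives
\[
H_i(f;\Q)\bigl(H^{n-i}(f;\Q)(\alpha)\frown[M]\bigr)=\alpha\frown H_n(f)([M])=d\,(\alpha\frown[N]).
\]
Second, Poincaré duality for $N$ says that $\alpha\mapsto\alpha\frown[N]$ is an isomorphism $H^{n-i}(N;\Q)\to H_i(N;\Q)$. So every class $y\in H_i(N;\Q)$ has the form $\alpha\frown[N]$ for some $\alpha$. Then
\[
y=H_i(f;\Q)\Bigl(\tfrac{1}{d}\,H^{n-i}(f;\Q)(\alpha)\frown[M]\Bigr),
\]
which lies in the image of $H_i(f;\Q)$. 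This gives surjectivity in every degree $i$. Note that only duality on $N$ is actually used; the argument never needs duality on $M$.

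The only step requiring care is the projection formula itself: the sign conventions and the precise statement of naturality of $\frown$ with respect to $f_*$ and $f^*$. This is standard (for example, Hatcher's treatment of cap products), so I would simply cite it.

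One could alternatively phrase the argument dually. Show that $H^i(f;\Q)$ is injective: if $f^*\alpha=0$, pair $\alpha$ with its Poincaré-dual partner $\beta$ to get $d\langle\alpha\smile\beta,[N]\rangle=\langle f^*\alpha\smile f^*\beta,[M]\rangle=0$. This forces $\alpha=0$ by nondegeneracy of the cup product pairing over $\Q$. Surjectivity in homology then follows from the universal coefficient theorem over a field, since $H^i=\mathrm{Hom}(H_i,\Q)$. I would present the cap-product version as the main argument, since it is more direct.
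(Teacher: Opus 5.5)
Your proof is correct. It is the standard umkehr-map argument: the map $f_!=(-\frown[M])\circ H^{n-i}(f;\Q)\circ(-\frown[N])^{-1}$ satisfies $H_i(f;\Q)\circ f_!=\deg(f)\cdot\mathrm{id}$ by naturality of the cap product, which is exactly your computation, and the survey itself gives no proof but defers to the reference it cites for this same standard result.
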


We are left with the sets $D(\Sigma_g,\Sigma_h)$, where $g\geq h\geq 2$. Then, an obstruction is given by the {\em simplicial volume}: Given a topological space $X$ and a homology class $\alpha\in H_n(X;\R)$, Gromov~\cite{Gr1} introduced 
the $\ell^1$-semi-norm of $\alpha$ to be
\[
\|\alpha\|_1:=\inf \biggl\{\sum_{j} |\lambda_j| \ \biggl\vert  \ \sum_j \lambda_j\sigma_j\in C_n(X;\R) \ \text{is a singular cycle representing } \alpha  \biggl\}.
\]
When $M$ is an $n$-dimensional manifold, the simplicial volume of $M$ is given by $\|M\|:=\|[M]\|_1$.

The Gromov norm satisfies the following straightforward property:

\begin{lem}[Functoriality]\label{l:functorial}
Let $X,Y$ be topological spaces and $\alpha\in H_n(X;\R)$. If $f\colon X\to Y$ is a continuous map, then $\|\alpha\|_1\geq \|H_n(f)(\alpha)\|_1$.
\end{lem}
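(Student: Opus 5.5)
The plan is to push a near-optimal cycle for $\alpha$ forward along $f$ at the chain level, and check that this can only make the $\ell^1$-norm smaller. I would start from an arbitrary real singular cycle $c=\sum_j \lambda_j\sigma_j\in C_n(X;\R)$ representing $\alpha$, where each $\sigma_j\colon\Delta^n\to X$ is a singular simplex. The induced chain map $f_\#\colon C_n(X;\R)\to C_n(Y;\R)$ is defined by $f_\#(\sigma)=f\circ\sigma$ and extended linearly. Since $f_\#$ commutes with the boundary operator, $f_\#(c)=\sum_j\lambda_j (f\circ\sigma_j)$ is a cycle. By the definition of $H_n(f)$, it represents $H_n(f)(\alpha)$.

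Next I would compare $\ell^1$-norms at the chain level. The chain $f_\#(c)$ need not be written in reduced form, because distinct simplices $\sigma_j\neq\sigma_k$ may satisfy $f\circ\sigma_j=f\circ\sigma_k$. After collecting terms, the coefficient of a singular simplex $\tau$ in $Y$ is $\sum_{j:\, f\circ\sigma_j=\tau}\lambda_j$. By the triangle inequality, the total $\ell^1$-norm of the collected chain is at most $\sum_j|\lambda_j|$. So $\|H_n(f)(\alpha)\|_1\leq \sum_j|\lambda_j|$, since the left side is an infimum over all representing cycles and we have exhibited one whose norm is bounded by this quantity.

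Finally, this holds for every representing cycle $c$ of $\alpha$, so I would take the infimum over all such $c$ to obtain $\|H_n(f)(\alpha)\|_1\leq\|\alpha\|_1$. The only point needing care, and it is a minor one, is the collapsing of simplices under $f$. The norm must be computed after combining equal simplices, and the triangle inequality handles exactly that. No other obstacle arises. As a remark for the application that follows in the paper: taking $\alpha=[M]$ and using $H_n(f)([M])=\deg(f)[N]$ together with the homogeneity $\|d\beta\|_1=|d|\,\|\beta\|_1$ (immediate from scaling cycles) gives $\|M\|\geq|\deg(f)|\,\|N\|$.
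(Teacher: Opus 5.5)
Your proof is correct. It is the standard chain-level argument: push a representing cycle forward by $f_\#$, use the triangle inequality to handle simplices identified by $f$, and take the infimum. The paper gives no proof and simply calls the property straightforward, and this is the argument it implicitly relies on.
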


Since for an aspherical surface $\Sigma_g$ the simplicial volume is given by (see~\cite{Gr3})
\[
\|\Sigma_g\|=2|\chi(\Sigma)|=4(g-1),
\]
we obtain the following bound on the degree of a map $f\colon\Sigma_g\to\Sigma_h$, $h\geq2$:
\[
|\deg(f)|\leq\frac{g-1}{h-1}.
\]
It is then easy to show that actually every integer that falls within this bound can be realised as a mapping degree of a map from $\Sigma_g$ to $\Sigma_h$:

\begin{prop}\label{p:hsur}
If $g\geq h\geq 2$, then
$$
D(\Sigma_g,\Sigma_h)=\biggl\{0,\pm1,...,\pm\biggl[\frac{g-1}{h-1}\biggl]\biggl\}.
$$
\end{prop}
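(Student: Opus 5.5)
The proof has two inclusions. The inclusion $\subseteq$ has essentially been established already. For $f\colon\Sigma_g\to\Sigma_h$, Lemma \ref{l:functorial} applied to $\alpha=[\Sigma_g]$ gives
\[
4(g-1)=\|\Sigma_g\|\geq \|\deg(f)[\Sigma_h]\|_1=|\deg(f)|\,\|\Sigma_h\|=4(h-1)|\deg(f)|.
\]
Here we use that the $\ell^1$-semi-norm is homogeneous. Hence $|\deg(f)|\leq (g-1)/(h-1)$. Since the degree is an integer, $|\deg(f)|\leq [\frac{g-1}{h-1}]$. This inclusion uses $h\geq 2$ so that $\|\Sigma_h\|>0$.

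For the reverse inclusion $\supseteq$, I will realise every $d$ with $0\leq d\leq k:=[\frac{g-1}{h-1}]$. Degree $0$ is realised by a constant map. Negative degrees follow by precomposing with an orientation-reversing self-homeomorphism of $\Sigma_g$, for instance a reflection. So it suffices to realise each $d$ with $1\leq d\leq k$.

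The construction combines a covering with pinch maps. Fix such a $d$. Since $d(h-1)\leq g-1$, the surface $\Sigma_{d(h-1)+1}$ is a connected $d$-fold covering of $\Sigma_h$, because Euler characteristics multiply under coverings: $\chi=d(2-2h)$. Such a cover exists by taking a surjection $\pi_1(\Sigma_h)\to\Z/d$, for example by sending one generator $a_1$ to $1$ and all other generators to $0$. This covering map $c$ has degree $d$. Write $g' = d(h-1)+1\leq g$. Then $\Sigma_g=\Sigma_{g'}\#\Sigma_{g-g'}$. The pinch map that collapses the $\Sigma_{g-g'}$ summand (minus a disc) to a point is a degree one map $p\colon\Sigma_g\to\Sigma_{g'}$, as in Example \ref{ex:pinch}; when $g=g'$ we take $p$ to be the identity. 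The composite $c\circ p\colon\Sigma_g\to\Sigma_h$ then has degree $d$, by multiplicativity of degrees.

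The only step requiring some care is verifying that the pinch map has degree one and that the cover is connected of the right genus. Both are standard: degree one follows by computing the local degree at a point outside the collapsed region, and the genus follows from the Euler characteristic computation. I do not expect a real obstacle; the substantive input is the simplicial volume value $\|\Sigma_g\|=4(g-1)$, which is quoted from \cite{Gr3}.
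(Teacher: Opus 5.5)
Your proof is correct and takes essentially the paper's route. The bound $|\deg(f)|\leq (g-1)/(h-1)$ comes from functoriality of the simplicial volume with $\|\Sigma_g\|=4(g-1)$. The realisation, which the paper only calls easy, is the standard composite of a degree-one pinch $\Sigma_g\to\Sigma_{d(h-1)+1}$ with a $d$-fold covering $\Sigma_{d(h-1)+1}\to\Sigma_h$, followed by an orientation-reversing homeomorphism to obtain negative degrees.
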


Proposition \ref{p:hsur} gives us in particular the following:

\begin{ex}\label{ex:surfaces}
Let $k\geq 1$. Then the set $\{0,\pm1,...,\pm k\}$ can be realised as the mapping degree set 
$D(\Sigma_{k+1},\Sigma_2)$.
\end{ex}

Interpretating the simplicial volume of an aspherical surface in terms of the Euler characteristic, together with a question of Gromov whether the non-vanishing of the Euler characteristic for an aspherical manifold implies the non-vanishing of the simplicial volume of that manifold (see~\cite{Gr2}), we are led to the following natural question:

\begin{prob}[Functoriality of the Euler characteristic for aspherical manifolds]
Is the absolute value of the Euler characteristic functorial for aspherical manifolds? That is, if $M,N$ are aspherical manifolds and $f\colon M\to N$ is a map of non-zero degree, does it hold $|\chi(M)|\geq\deg(f)|\chi(N)|$?
\end{prob}

Example \ref{ex:surfaces} gives us already infinitely many mapping degree sets starting with any integer $k$. If one would like to create as well some gaps between the mapping degrees, then quotients of $S^n$ are suitable:

\begin{ex}
Let $N$ be a quotient of $S^n$ and denote by $|\pi_1(N)|$ the order of its fundamental group. Then for any $n$-manifold $M$ we have 
\[
D(M,N)=\{d+|\pi_1(N)|\Z \ |\ \text{for some integers} \ d \ \text{such that} \ 1\leq d\leq |\pi_1(N)|\}.
\]
See~\cite{NWW} for a proof.
\end{ex}

Having in hands the above examples, one can start speculating large classes of sets for which Problem \ref{p:realization} has an affirmative solution. Indeed, all the above examples are (unions) of arithmetic progressions: A  finite sequence of integer intervals 
\[
\{[b_i, c_i], i=1,2,...,l\}
\]
is called {\em arithmetic}, if the lengths of all $[b_i, c_i]$ are equal,
and all the differences $b_{i+1}-b_i$ are equal. When $b_i=c_i$, we obtain a usual finite arithmetic progression. In the proof of part (b) we will see a much more general process that implies the realization of finite progressions. This process will be instrumental for parts (c) and (d).

\section{Proof of Theorem \ref{t:main}}

\subsection{Proof of part (a)}

This is an immediate consequence of the fact that there are countably many homotopy types of $n$-manifolds~\cite[Corollary, p.93]{Ma} and thus countably many mapping degree sets $D(M,N)$, where $M,N$ run all over $n$-manifolds. Indeed, the set $\Z-\{0\}$ has uncountably many subsets and countably many finite sets, so it has uncountably many infinite subsets.  In particular, $\Z$ has uncountably many infinite subsets containing zero, and so there are uncountably many such sets that cannot be written in the form $D(M,N)$.

\subsection{Proof of part (b)}\label{s:proofrealizability}

Our main result is the following:

\begin{thm}\cite[Theorem 3.1]{NWW}\label{partb}
For any $k\in \N$ and any integers 
\[
d_1,d_2,...,d_k>0 \ \text{and} \
n_1,n_1',n_2,n_2',...,n_k,n_k'\ge 0,
\] 
there exist $3$-manifolds $M,N$ such that 
\[
D(M,N)=\{d\in\mathbb{Z} \ | \ d=\sum_{i=1}^{k}{m_id_i},\,\,-n_i'\le m_i \le n_i\}.
\]
\end{thm}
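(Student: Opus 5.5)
We build $M$ and $N$ as connected sums of aspherical $3$-manifolds. The point is that a map $M\to N$ of non-zero degree should split into independent pieces, one for each summand of $N$. Piece $i$ should contribute exactly $m_id_i$ with $-n_i'\le m_i\le n_i$, and the sum of the contributions should give $\deg(f)$.

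\emph{Building blocks.} For each $i$ we choose a hyperbolic $3$-manifold $N_i$. The choices are made pairwise "incomparable": there is no map of non-zero degree $N_i\to N_j$ for $i\ne j$, and $D(N_i,N_i)=\{0,\pm1\}$, or $\{0,1\}$ if $N_i$ admits no orientation-reversing self-map of degree one. By Mostow rigidity and the functoriality of simplicial volume (Lemma~\ref{l:functorial}), any self-map of non-zero degree has degree $\pm1$. Chirality, i.e.\ having no orientation-reversing isometry, rules out $-1$. We then choose a finite cover $\widetilde N_i\to N_i$ of degree $d_i$, or some other manifold with a degree-$d_i$ map onto $N_i$, which realises the degrees $\pm d_i$ and no larger multiple. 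Set
\[
N=N_1\#\cdots\#N_k, \qquad M=\#_{i=1}^k\Bigl(\#^{n_i}\widetilde N_i\ \#\ \#^{n_i'}\overline{\widetilde N_i}\Bigr),
\]
where $\overline X$ denotes $X$ with reversed orientation.

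\emph{Realisation.} Every value $\sum m_id_i$ with $-n_i'\le m_i\le n_i$ is attained by the following construction. Pinch $M$ onto the wedge of its prime summands. Send $m_i$ copies of $\widetilde N_i$ (or $|m_i|$ copies of $\overline{\widetilde N_i}$ when $m_i<0$) by the covering onto $N_i$, composed with the pinch $N\to N_i$ read backwards via the standard connected-sum degree formula. Send all remaining summands to points. Degrees add over the summands, which gives the contribution $m_id_i$ from each $i$.

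\emph{Obstruction.} This is the main step. Let $f\colon M\to N$ have non-zero degree. We would use the Kneser-type splitting of maps between connected sums of aspherical (hyperbolic) $3$-manifolds, as in the work on $1$-domination. Since $N$ is a connected sum and the $N_j$ are aspherical, $f$ can be homotoped so that $\deg f=\sum_{P,j}\deg(P\to N_j)$. Here $P$ runs over the prime summands of $M$, and $P\to N_j$ denotes the composite of $f$ restricted to $P$ with the pinch onto $N_j$. This additivity is proved by pulling back the separating spheres and compressing; asphericity of the summands makes the resulting maps well defined up to homotopy.

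Incomparability then kills every term with $P$ a copy of $\widetilde N_i$ and $j\ne i$. To arrange this, choose the $N_i$ with pairwise distinct volumes and sufficiently generic, e.g.\ non-commensurable, and choose $d_i$-fold covers that admit no non-zero degree map onto any $N_j$ with $j\neq i$. This can be forced by homology, using Lemma~\ref{l:homol}, or by simplicial volume if the $N_j$ are ordered by volume. For $j=i$, any map $\widetilde N_i\to N_i$ of non-zero degree should have degree exactly $\pm d_i$, with sign determined by orientation. This follows from simplicial volume, which gives $|\deg|\le d_i$, combined with Mostow rigidity and chirality, which rule out smaller non-zero degrees and the wrong sign; Lemma~\ref{l:fund} is also useful here.

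Summing the contributions, each summand of type $i$ contributes $0$ or $+d_i$ if it is $\widetilde N_i$, and $0$ or $-d_i$ if it is $\overline{\widetilde N_i}$. Hence $\deg f=\sum m_id_i$ with $-n_i'\le m_i\le n_i$.

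The delicate parts of the argument are controlling the possible degrees of maps $\widetilde N_i\to N_i$ and establishing the additivity over connected-sum decompositions. If the rigidity argument for covers fails, the fallback is to use instead a manifold $\widetilde N_i$ obtained by a "$d_i$-fold branched/covering" construction whose degree set onto $N_i$ is exactly $\{0,d_i\}$.
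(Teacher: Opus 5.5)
The construction breaks down as soon as $k\ge 2$, because you take the target to be a connected sum $N=N_1\#\cdots\#N_k$. For each $j$ the pinch $p_j\colon N\to N_j$ has degree one, so $\deg(f)=\deg(p_j\circ f)\in D(M,N_j)$ for \emph{every} $j$ (Lemma~\ref{connectedsumdegrees}). The degrees seen through the different summands of the target must therefore all be the same number; they do not add up.

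Your formula $\deg f=\sum_{P,j}\deg(P\to N_j)$ double counts. For each fixed $j$, the splitting behind Lemma~\ref{l:degreeconnected} gives $\deg f=\sum_P\deg(P\to N_j)$, so your double sum equals $k\deg f$. Under your own incomparability assumptions, $D(M,N_j)=\{m_jd_j\mid -n_j'\le m_j\le n_j\}$. Hence $D(M,N)\subseteq\bigcap_j\{m_jd_j\mid -n_j'\le m_j\le n_j\}$, which is an intersection rather than the required sumset. For instance, take $k=2$, $d_1=2$, $d_2=3$, $n_1=n_2=1$, $n_1'=n_2'=0$. You need $\{0,2,3,5\}$, but your $M,N$ give $D(M,N)\subseteq\{0,2\}\cap\{0,3\}=\{0\}$.

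The realisation step fails for the same reason. A pinch $N\to N_i$ cannot be ``read backwards''. In fact $D(\widetilde N_i,N)\subseteq D(\widetilde N_i,N_j)=\{0\}$ for any $j\ne i$, so no single summand of $M$ maps onto $N$ with non-zero degree.

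The missing idea is to send all summands of $M$ to one and the same \emph{aspherical} manifold $N$. Then $\pi_2(N)=0$, Lemma~\ref{l:degreeconnected} holds with equality, and connected sum in the source becomes the sum of degree sets. One then needs, for this common $N$, blocks $M_i,M_i'$ with $D(M_i,N)=\{0,d_i\}$ and $D(M_i',N)=\{0,-d_i\}$ \emph{exactly}. The paper uses circle bundles over a fixed hyperbolic surface: $N=K_{d'}$ with $d'=d_1\cdots d_k$, and $M_i=K_{d'/d_i}$, $M_i'=K_{-d'/d_i}$. Lemma~\ref{l:degreecircle} gives exactly these sets.

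Your proposed source of such exactness would not work even with a single hyperbolic target. Simplicial volume and Mostow/Gromov--Thurston rigidity give $|\deg|\le d_i$ for maps $\widetilde N_i\to N_i$, and show that maps of degree $\pm d_i$ are homotopic to coverings. They do not rule out non-covering maps of degree $1,\dots,d_i-1$. Chirality of $N_i$ also does not rule out an orientation-reversing covering $\widetilde N_i\to N_i$, since a cover may cover both $N_i$ and $\overline{N_i}$. Lemma~\ref{l:degreecircle} is precisely the rigidity statement that handles both issues in the circle-bundle setting.
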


Indeed, Theorem \ref{partb} implies Theorem \ref{t:main} (b): Let 
$$\{[b_i, c_i], i=1,2,...,l\}$$ 
be a finite arithmetic sequence of integer  intervals,
where $b_i\le c_i< b_{i+1}$, and $0\in [b_k, c_k]$ for some $1\le k\le l$.
Set
\[
n_1=c_k,\,\,  n_1'=-b_k,\,\, d_2=b_2-b_1,\,\, n_2=l-k,\,\, n_2'=k-1.
\]
 Since $\{b_i, \ i=1,...,l\}$ is an arithmetic sequence with constant difference $d_2$, we have 
$$b_i=b_k+d_2(i-k)=-n_1'+d_2(i-k).$$
Similarly, 
$$c_i=c_k+d_2(i-k)=n_1+d_2(i-k).$$
Thus, we obtain
\begin{equation*}
\begin{aligned}
A=\bigcup_{i=1}^{l}{[b_i,c_i]}
&=\bigcup_{i=1}^{l}[-n_1'+d_2(i-k),n_1+d_2(i-k)]\\
&=\bigcup_{j=1-k}^{l-k}[-n_1'+d_2j,n_1+d_2j]\\
&=\bigcup_{j=-n_2'}^{n_2}[-n_1'+d_2j,n_1+d_2j]\\
&=\bigcup_{j=-n_2'}^{n_2}\{d\in\Z \ | \ d=m_1+jd_2,\,\,-n_1'\le m_1\le n_1\}\\
&=\bigcup_{m_2=-n_2'}^{n_2}\{d\in\Z \ | \ d=m_1+m_2d_2,\,\,-n_1'\le m_1\le n_1\}\\
&=\{d\in\Z \ | \ d=m_1+m_2d_2,\,\,-n_i'\le m_i\le n_i\}.
\end{aligned}
\end{equation*}
Now the proof for Theorem \ref{t:main} (b) follows by Theorem \ref{partb} for $k=2$ and $d_1=1$.

\medskip

We will now sketch the proof of Theorem \ref{partb}.  The building blocks for our construction will be circle bundles over surfaces. Given such a bundle $S^1\to K\to\Sigma$, where $\Sigma$ is a surface, the Euler number of $K$ is defined by the Kronecker product
\[
\hat{e}(K)=\langle e(K),[\Sigma]\rangle,
\]
where $e(K)\in H^2(\Sigma;\Z)=\Z$ denotes the Euler class of $K$.

The following lemma determines the mapping degree sets when running over all Euler numbers for a fixed hyperbolic surface. 

\begin{lem}\cite[Lemma 3.4]{NWW}.\label{l:degreecircle}
Let $\Sigma$ be a hyperbolic surface and $K_i\longrightarrow\Sigma$ be the circle bundle with Euler number $\hat{e}(K_i)=i$. Then
\begin{equation}\label{eq.mappingcircle}
D(K_i,K_j)= \left\{\begin{array}{ll}
       \{0,\frac{j}{i}\}, & \text{if} \ i\mid j\\
       \text{} & \\
        \{0\}, & \text{if} \ i\nmid j.
        \end{array}\right.
\end{equation}
\end{lem}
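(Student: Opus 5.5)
Let $\Sigma$ have genus $g\geq2$ and let $f\colon K_i\to K_j$ have non-zero degree, with $i,j$ taken positive (the remaining sign cases are analogous). The plan is to show that, up to homotopy, $f$ is fibre preserving and covers a map $\bar f\colon\Sigma\to\Sigma$. Then to compute $\deg f$ as the product of the fibre degree and the base degree. Finally, to combine this with the way Euler numbers transform under such maps, and to use Proposition~\ref{p:hsur} to pin the base degree to $\pm1$.

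\textbf{Step 1 (fibre preserving).} Because $i\neq0$, the fibre generates an infinite cyclic centre $\langle t\rangle$ of $\pi_1(K_i)$, and $\pi_1(K_i)/\langle t\rangle\cong\pi_1(\Sigma)$ has trivial centre. By Lemma~\ref{l:fund}, $f_*(\pi_1(K_i))$ has finite index in $\pi_1(K_j)$, so $f_*(t)$ commutes with a finite-index subgroup. The centraliser of such a subgroup in $\pi_1(K_j)$ is the fibre subgroup, since finite-index subgroups of $\pi_1(\Sigma)$ are centreless. Hence $f_*(t)=t^{a}$ for some $a$. We get an induced homomorphism $\pi_1(\Sigma)\to\pi_1(\Sigma)$, realised by a map $\bar f$ since $\Sigma$ is aspherical. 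Since $K_j$ is aspherical, $f$ is homotopic to a fibre preserving map over $\bar f$.

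\textbf{Step 2 (degree formula and Euler numbers).} For a fibre preserving map, $\deg f=a\cdot\deg\bar f$. We may assume $a\neq0$ and $\deg\bar f\ne0$, since otherwise $\deg f=0$. Pulling back $K_j$ along $\bar f$ gives a bundle with Euler number $j\deg\bar f$. The map $f$ factors through this pullback as a fibrewise degree $a$ map from $K_i$, which forces
\[
a\cdot\hat e(K_i)=\deg\bar f\cdot \hat e(K_j),
\qquad\text{that is,}\qquad ai=j\deg\bar f.
\]
The same base and fibre orientation bookkeeping is what handles the sign cases. By Proposition~\ref{p:hsur}, $D(\Sigma,\Sigma)=\{0,\pm1\}$, so $\deg\bar f=\pm1$. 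Therefore $ai=\pm j$ and $\deg f=a\deg\bar f=\pm j/i$. With the orientation conventions one checks that the sign comes out as $j/i$. In particular, a nonzero degree is possible only if $i\mid j$.

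\textbf{Step 3 (realisation).} If $i\mid j$, I will construct the map directly. Take $\bar f=\mathrm{id}_\Sigma$ and use the fibrewise $j/i$-power map on circles. This map carries the bundle with Euler number $i$ to the bundle with Euler number $j$, because the Euler class multiplies by $j/i$. The constant map supplies degree $0$. Together with Steps 1 and 2, this gives exactly $D(K_i,K_j)=\{0,j/i\}$ when $i\mid j$, and $\{0\}$ otherwise.

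The main obstacle is Step 1 together with the Euler number identity in Step 2. The fibre-preservation argument relies on the centre computation, which is valid here because $i\neq0$ forces the base to be hyperbolic and the fibre class to be central. The identity $ai=j\deg\bar f$ requires careful obstruction-theoretic, or cohomological, bookkeeping of how Euler classes transform under fibrewise degree $a$ maps.
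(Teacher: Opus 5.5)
Your outline follows the paper's route. The survey only cites the lemma, but later presents the general version of exactly this argument: Corollary~\ref{fiber3} for fibre preservation, Theorem~\ref{main1} for the degree and Euler class bookkeeping, and Proposition~\ref{appl2} for the resulting degree set. Your centre computation, Step~2 and Step~3 are fine.

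The gap is the last sentence of Step~1. Asphericity of $K_j$ only tells you that any map inducing $f_*$ (up to conjugacy) is homotopic to $f$. It does not produce a \emph{fibre-preserving} map inducing $f_*$. That existence statement is precisely the nontrivial input the paper imports as Theorem~\ref{main2}/Corollary~\ref{fiber3}. Moreover, if you try to build such a map by hand over $\bar f$, the obstruction to extending over the $2$-cell of $\Sigma$ is exactly the failure of $a\,e(K_i)=H^2(\bar f)(e(K_j))$. That is the identity you only derive afterwards, in Step~2, so it has to come first and from group theory.

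To close the gap, write $e_i=e(K_i)$. Since $f_*(t)=t^a$, $f_*$ is a morphism from the central extension $1\to\langle t\rangle\to\pi_1(K_i)\to\pi_1(\Sigma)\to 1$ to the corresponding one for $K_j$. It induces multiplication by $a$ on the kernels and $\bar f_*$ on the quotients. Hence the pushout of the first extension along $a$ is equivalent to the pullback of the second along $\bar f_*$. So $a\,e_i=\bar f^{*}e_j$ in $H^2(\pi_1(\Sigma);\Z)=H^2(\Sigma;\Z)$, that is, $ai=j\deg\bar f$, with no geometric input.

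This identity gives a bundle map $\Phi\colon K_i\to K_j$ over $\bar f$ with fibre degree $a$. Take the fibrewise $a$-th power onto the bundle with Euler class $a\,e_i$, identify that bundle with $\bar f^{*}K_j$, and follow with the canonical map to $K_j$. After adjusting basepoints so that $\Phi_*$ and $f_*$ induce the same map on $\pi_1(\Sigma)$, you have $f_*(g)=\Phi_*(g)\,t^{c(g)}$, where $c\colon\pi_1(K_i)\to\Z$ is a homomorphism with $c(t)=0$. Thus $c$ is a class in $H^1(\Sigma;\Z)=[\Sigma,S^1]$. Twist $\Phi$ by the principal $S^1$-action along a representative $\lambda\colon\Sigma\to S^1$ of $c$. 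This gives a fibre-preserving $\Phi'$ with $\Phi'_*=f_*$, and only now does asphericity of $K_j$ give $f\simeq\Phi'$.

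Finally, you need neither a separate orientation check nor the reduction to $i,j>0$. From $ai=j\deg\bar f$ and $(\deg\bar f)^2=1$ you get $\deg f=a\deg\bar f=j/i$ directly, for every $i\neq0$ and every $j$. This is also why orientation-reversing base maps cause no trouble here, even though condition (ii) of Proposition~\ref{0,k1} fails for surfaces.
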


Lemma \ref{l:degreecircle} has its roots in~\cite{Ne2}, where the following result (stated here in a more detailed form) was proved:

\begin{thm}\cite[Theorem 1.3]{Ne2}\label{t:NeHopf}
Suppose $B$ is an aspherical manifold with hyperbolic fundamental group and $M\to B$ is a circle bundle. $M$ admits a self-map of degree greater than one if and only if $M$ is finitely covered by a product $S^1\times \overline B$, where $\overline B$ is a finite cover of $B$.
\end{thm}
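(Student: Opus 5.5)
The statement to prove is Theorem \ref{t:NeHopf}. One direction is easy and the other is the substance.

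\emph{Sufficiency.} Suppose $p\colon S^1\times\overline B\to M$ is a finite cover of degree $r$. On the product, take $g=\phi_m\times\mathrm{id}_{\overline B}$, where $\phi_m\colon S^1\to S^1$ is $z\mapsto z^m$; it has degree $m$. I would like to descend $g$ to $M$ directly, but that requires care. Instead I would pass to a finite cover on which the deck group acts compatibly: replace the cover by a characteristic (hence regular) finite cover, so that the deck group commutes with a suitable power map along the fiber. Alternatively, work with the circle bundle structure directly. The fiber circle of $M$ generates the center of $\pi_1(M)$, because $\pi_1(B)$ is hyperbolic and therefore centerless. Virtual triviality then means that the Euler class of $M$ is torsion in $H^2(B;\Z)$. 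A bundle with torsion Euler class $e$ admits fiberwise $m$-th power maps $M\to M$ covering $\mathrm{id}_B$ whenever $m\equiv 1 \pmod{\operatorname{ord}(e)}$: the bundle $M^{(m)}$ obtained by applying $z\mapsto z^m$ fiberwise has Euler class $me=e$, so it is isomorphic to $M$. This gives self-maps of degree $m>1$.

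\emph{Necessity.} Let $f\colon M\to M$ have degree $d$ with $|d|>1$. By Lemma \ref{l:fund}, $f_*(\pi_1 M)$ has finite index in $\pi_1 M$.

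The first step is to show that $f_*$ maps the fiber subgroup $\langle t\rangle$ into itself. The image of a central element is central in $f_*(\pi_1M)$, which has finite index. Its image in $\pi_1(B)$ therefore centralizes a finite-index subgroup of a hyperbolic group, so it is torsion. Since $\pi_1 B$ is torsion-free ($B$ is aspherical), the image is trivial, and so $f_*(t)\in\langle t\rangle$.

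Consequently $f$ induces a map $\bar f\colon B\to B$ on the base, up to homotopy, because $B$ is aspherical. Degrees then factor as $d=\deg(\bar f)\cdot k$, where $f_*(t)=t^k$. Hyperbolic groups have positive simplicial volume in the relevant sense (Gromov), and by Lemma \ref{l:functorial} this gives $|\deg \bar f|\le 1$. Since $d\neq 0$, $\bar f$ is a homotopy equivalence of degree $\pm1$. Hence $|k|=|d|>1$.

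The next step uses naturality of the Euler class: $\bar f^*e(M)=k\,e(M)$. Taking degrees gives $\deg(\bar f)\,\hat e=k\,\hat e$ rationally, i.e. $\pm\hat e=k\hat e$ with $|k|>1$. This forces $\hat e=0$ rationally, so $e(M)$ is torsion. Pulling back to a finite cover $\overline B$ kills the torsion (pass to a finite cover trivializing the torsion class, e.g. using residual finiteness, or simply the cover corresponding to a finite quotient detecting it). Over $\overline B$ the bundle is trivial, so $M$ is finitely covered by $S^1\times\overline B$.

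The main obstacle is the degree factorization and the Euler class naturality step. One must verify that $f$ is homotopic to a fiber-preserving map, and that the relation $\bar f^*e=k\,e$ holds exactly. I would obtain both from the central extension description $1\to\Z\to\pi_1M\to\pi_1B\to1$ together with asphericity of $M$, identifying $e$ with the extension class in $H^2(\pi_1B;\Z)$.
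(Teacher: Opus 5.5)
The paper only quotes this theorem from \cite{Ne2} and contains no proof of it, so I am judging your argument on its own. Most of it follows the natural route and is sound. The fiber class generates the center of $\pi_1M$, and your centralizer argument gives $f_*(t)=t^k$. The simplicial volume bound forces $\deg\bar f=\pm1$, hence $|k|=|d|>1$. Naturality of the extension class gives $\bar f^*e=ke$. The sufficiency direction via $M\to M^{(m)}\cong M$ for $m\equiv1$ modulo the order of $e$ is also correct.

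The genuine gap is the step ``taking degrees gives $\deg(\bar f)\,\hat e=k\,\hat e$''. The Euler number $\hat e=\langle e,[\Sigma]\rangle$ only makes sense when $B$ is a surface. For $\dim B\ge3$ the group $H^2(B;\Z)$ can have any rank, and there is no canonical class on which to evaluate $e$. Moreover, $\bar f^*$ acts on $H^2(B;\Z)$ by some automorphism, not by multiplication by $\deg\bar f$. So $\bar f^*e=ke$ does not reduce to $\pm\hat e=k\hat e$, and as written your proof only handles surface bases.

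What is needed instead is an integrality argument. Since $\deg\bar f=\pm1$, the identity $\bar f_*(\bar f^*\alpha\cap[B])=\pm\,\alpha\cap[B]$ together with Poincar\'e duality shows that $\bar f_*$ is surjective on $H_2(B;\Z)$. Hence it is an automorphism of $H_2(B;\Z)/\mathrm{tors}\cong\Z^r$. Dually, $\bar f^*$ induces an automorphism $A$ of $L=H^2(B;\Z)/\mathrm{tors}\cong\mathrm{Hom}(H_2(B;\Z),\Z)$. Your assertion that $\bar f$ is a homotopy equivalence would also give this, but it needs justification, e.g.\ Sela's theorem that torsion-free hyperbolic groups are Hopfian. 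Now $Ae=ke$ in $L$ yields $e=kA^{-1}e=k^2A^{-2}e=\cdots$, so the image of $e$ in $L$ is divisible by every power of $k$. Since $|k|>1$ it is zero, i.e.\ $e$ is torsion.

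A few smaller points:
\begin{itemize}
\item Residual finiteness is not known for hyperbolic groups, and it is not needed. A torsion class is $e=\beta(\theta)$ for the Bockstein $\beta$ and some $\theta\in H^1(B;\Q/\Z)=\mathrm{Hom}(\pi_1B,\Q/\Z)$ with finite image. So $e$ dies on the finite cover corresponding to $\ker\theta$.
\item In the sufficiency direction, ``virtual triviality means $e$ is torsion'' needs an argument. Note that the center of any finite-index $H\le\pi_1M$ is $H\cap\langle t\rangle$, by your centralizer argument again. So the splitting of $H$ forces the restriction of $e$ to a finite cover of $B$ to vanish, and a transfer argument then shows $e$ is torsion.
\item The factorization $d=k\deg\bar f$ that you flag follows from naturality of the Lyndon--Hochschild--Serre spectral sequence. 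There $H_{\dim M}(M)=E^2_{\dim B,1}=H_{\dim B}(B;\Z)$, and the morphism of extensions acts on it by $k\deg\bar f$. Alternatively, once $k\neq0$ is noted, it follows from Theorem \ref{main2}.
\end{itemize}
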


Moreover, if a map $f\colon M\to M$ with $|\deg(f)|>1$ exists, where $M$ is as in Theorem \ref{t:NeHopf}, then $f$ is in all (known) cases homotopic to a covering. If $\deg(f)=\pm1$, then $f$ is a homotopy equivalence. 

A long-standing problem attributed to Hopf is whether every self-map of any manifold (not necessarily aspherical) of absolute degree one is a homotopy equivalence~\cite[Problem 5.26]{Kirby}. Theorem \ref{t:NeHopf} and all established results thus far lead us to the following:

\begin{prob}[Strong version of the Hopf problem for aspherical manifolds]\cite[Problem 1.2]{Ne2}
Is every self-map of non-zero degree of an aspherical manifold either a homotopy equivalence or homotopic to a covering?
\end{prob}

Note that all of the non-zero degree maps in Lemma \ref{l:degreecircle} are homotopic to coverings.

\medskip

Lemma \ref{l:degreecircle} gives us in particular that every two-point set $\{0,d\}$, $d\neq0$, is a mapping degree sets between two circle bundles over a hyperbolic surface. For example:

\begin{ex}\label{ex:surfaces2}
Let $d\in\Z-\{0\}$. Then the set $\{0, d\}$ is the mapping degree set $D(K_1,K_d)$.
\end{ex}

It is now tempting to combine arbitrary sets of the form $\{0,d\}$ to realise finite sets. The next lemma is indeed another basic ingredient for the proof of Theorem \ref{partb}. Given sets of integers $A_i$, $i=1,...,k$, the sum of $A_i$ is defined to be
\[
\sum_{i=1}^k A_i =\biggl\{\sum _{i=1}^k a_i \,| \,a_i\in A_i\biggl\}.
\]
When $A_1,...,A_k$ are equal to the same $A$, we shall denote $\sum_{i=1}^k A_i$ by $\sum^k A$.

\begin{lem}\cite[Lemma 3.5]{NWW}\label{l:degreeconnected}
Let $M_1,M_2$ and $N$ be $n$-manifolds. Then
\begin{equation}\label{eq.connected}
D(M_1,N)+D(M_2,N)\subseteq D(M_1\#M_2,N),
\end{equation}
with equality if $\pi_{n-1}(N)=0$. 
\end{lem}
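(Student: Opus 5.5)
The inclusion comes from composing with the pinch map. For the equality, the plan is to make a map on the connected sum constant near the separating sphere, so that it splits into maps from $M_1$ and $M_2$.

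\emph{Inclusion.} Let $f_1\colon M_1\to N$ and $f_2\colon M_2\to N$ have degrees $d_1,d_2$. Homotope each $f_i$ so that it sends a small closed $n$-ball $B_i\subset M_i$ to a single point $y_i\in N$. Since $N$ is connected, we may further arrange $y_1=y_2=y$ by composing with a homotopy moving $y_1$ to $y_2$ along a path. Form $M_1\#M_2$ by removing the interiors of the $B_i$ and gluing along the boundary spheres. The maps $f_1$ and $f_2$ then agree (both constant $y$) on the common sphere, so they glue to a map $f\colon M_1\#M_2\to N$.

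Equivalently, $f$ is the composite
\[
M_1\#M_2\xrightarrow{q} M_1\vee M_2\xrightarrow{f_1\vee f_2} N,
\]
where $q$ collapses the separating sphere. The induced map $H_n(q)$ sends $[M_1\#M_2]$ to $[M_1]+[M_2]$ in $H_n(M_1\vee M_2)=H_n(M_1)\oplus H_n(M_2)$. Hence $\deg f=d_1+d_2$.

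\emph{Equality when $\pi_{n-1}(N)=0$.} Let $f\colon M_1\#M_2\to N$ be any map, and let $S\cong S^{n-1}$ be the separating sphere, with a collar $S\times[-1,1]$. The restriction $f|_S$ is an element of $\pi_{n-1}(N)$, up to basepoint issues which are harmless because the group vanishes. So $f|_S$ is null-homotopic and extends to a map $D^n\to N$.

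Using the homotopy extension property along the collar, homotope $f$ so that it is constant on $S\times\{0\}$. Concretely, use the null-homotopy of $f|_S$ on $S\times[0,\epsilon]$ and reparametrise the rest. The modified $f$ then factors through $q\colon M_1\#M_2\to M_1\vee M_2$ as $g_1\vee g_2$. Here $g_i$ is defined on $M_i$ (equal to $M_i$ minus a ball, with that ball's boundary collapsed to a point), which is the same as $M_i$ with a ball mapped to a point.

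By the homology computation above, $\deg f=\deg g_1+\deg g_2\in D(M_1,N)+D(M_2,N)$, which gives the reverse inclusion.

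The main point needing care is the homotopy making $f$ constant on $S$. The clean way to handle it is to note that a null-homotopy $H\colon S\times[0,1]\to N$ with $H_0=f|_S$ and $H_1=\mathrm{const}$ can be inserted into the collar. Squeeze $M_1\setminus\text{collar}$ slightly, then use $H$ on $S\times[0,1]$ and its mirror image on $S\times[-1,0]$. The new map is homotopic to the original, since shrinking the inserted homotopy back to $f|_S$ gives the homotopy.

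The degree additivity uses only that the fundamental class of a connected sum restricts to the fundamental classes of the pieces. Orientations are compatible because the connected sum is formed orientedly.
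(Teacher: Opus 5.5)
Your proof is correct. The survey only cites this lemma without proof, and your argument is the standard one it relies on: the collapse $M_1\#M_2\to M_1\vee M_2$ followed by $f_1\vee f_2$ gives the inclusion, and for the converse you use $\pi_{n-1}(N)=0$ to homotope $f$ to be constant on the separating sphere, so that it factors through the wedge and $\deg f=\deg g_1+\deg g_2$.
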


Our building factors -- circle bundles over hyperbolic surfaces -- are aspherical 3-manifolds, in particular $\pi_2=0$, and so Lemma \ref{l:degreeconnected} is indeed applicable. We are now ready to conclude the proof of Theorem \ref{partb}:

\begin{proof}[Proof of Theorem \ref{partb}]
Set 
\[
d'=d_1d_2...d_k 
\]
and
\[
d_i'=d'/d_i, \ i=1,...,k.
\]
Let the following circle bundles over a hyperbolic surface $\Sigma$ given by
\[
N=K_{d'}, \ M_i=K_{d_i'} \ \text{and} \ M_i'=K_{-d_i'}
\]
 with Euler numbers
\[
\hat{e}(N)=d', \ \hat{e}(M_i)=d_i' \ \text{and} \ \hat{e}(M_i')=-d_i'
\]
respectively.

Since $d'/d'_i=d_i$, Lemma \ref{l:degreecircle} tells us that
\begin{equation}\label{eq.d_i}
D(M_i,N)=D(K_{d'_i}, K_{d'})=\{0,d_i\}.
\end{equation}
Similarly,
\begin{equation}\label{eq.-d_i}
D(M_i',N)=\{-d_i,0\}.
\end{equation}

Define  
\begin{equation*}
M=\#_{i=1}^{k}((\#_{n_i}{M_i})\#(\#_{n_i'}{M_i'})).
\end{equation*}
Since $N$ is aspherical, in particular $\pi_2(N)=0$, we can apply  Lemma \ref{l:degreeconnected} successively to compute

\[
D(M,N)=\sum_{i=1}^{k}{(\sum_{j_i=1}^{n_i}{D(M_i,N)}+\sum_{j_i=1}^{n_i'}D(M_i',N))}.
\]
By (\ref{eq.d_i}) and (\ref{eq.-d_i}),
$$\sum_{j_i=1}^{n_i}{D(M_i,N)}+\sum_{j_i=1}^{n_i'}{D(M_i',N)}$$ is the sum of  $n_i$ copies of $\{0, d_i\}$ and of $n'_i$ copies of $\{0, -d_i\}$.
Hence,
\[
\sum_{j_i=1}^{n_i}{D(M_i,N)}+\sum_{j_i=1}^{n_i'}{D(M_i',N)}=\{m_id_i \ |  -n_i'\le m_i\le n_i\}.
\]
We conclude that
$$D(M,N)=\{d\in\Z \ | \ d=\sum_{i=1}^{k}{m_id_i},\,\,-n_i'\le m_i\le n_i\},$$
finishing the proof of Theorem \ref{partb}.
\end{proof}

\subsection{Proof of part (c)}

After proving Theorem \ref{partb}, a natural step would be to consider other types of sequences, such as
\[
0,1,d,d^2,...,d^l,
\] 
for some integer $d>0$. 

This was achieved in~\cite{NWW} with $3l$-manifolds, by taking products of $3$-manifolds:

\begin{thm}\cite[Theorem 4.1]{NWW}\label{productdegrees}
Given integers $1\leq d_1 \leq d_2 \leq\cdots \leq d_l$, there exist $3l$-manifolds $M,N$ such that 
\[
D(M,N)=\{0,1\}\cup\biggl\{\prod_{j\in S}d_j \ | \ \emptyset\ne S\subseteq \{1,2,...,l\}\biggl\}.
\]
\end{thm}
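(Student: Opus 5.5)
We will take products of the $3$-manifolds of Lemma \ref{l:degreecircle}. Fix a hyperbolic surface $\Sigma$ and let $K_i\to\Sigma$ be the circle bundle with Euler number $i$. For $j=1,\dots,l$ put
\[
N=\prod_{j=1}^{l}K_{d_j}, \qquad M=\prod_{j=1}^{l}K_{1},
\]
so that, by Lemma \ref{l:degreecircle}, $D(K_1,K_{d_j})=\{0,d_j\}$ for each factor. Products of the degree-$d_j$ factor maps, with some factors replaced by identity-type maps, should realise the products $\prod_{j\in S}d_j$, and $0$ and $1$ must also appear. A plain product of $K_1$'s only gives the full product $d_1\cdots d_l$ and $0$, so the construction needs more room. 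We will therefore let the source and target each contain a factor $K_{d_j}$ with the option to map by degree $1$ or $d_j$. Concretely, take $M_j=K_{1}\# K_{d_j}$ and $N_j=K_{d_j}$. Since $\pi_2(K_{d_j})=0$, Lemma \ref{l:degreeconnected} gives
\[
D(M_j,N_j)=\{0,d_j\}+\{0,1\}=\{0,1,d_j,d_j+1\}.
\]
The extra $d_j+1$ is unwanted, so the source should instead be built as a product $M=\prod_j M_j$ where each $M_j$ allows exactly degrees in $\{0,1,d_j\}$. We must then check which combinations survive globally.

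The main step, and the main obstacle, is the upper bound: every map $f\colon M\to N$ of non-zero degree must be (up to homotopy) a product of maps between factors, possibly composed with a permutation of factors. Then its degree is a product of factor degrees, and the mixing of degrees across factors must be ruled out. To do this we use that $N$ is aspherical with fundamental group a product of central extensions $\Z\to\pi_1(K_{d_j})\to\pi_1(\Sigma)$. By Lemma \ref{l:fund}, $f_*$ has finite-index image. We will show that the centre $\Z^l$ is mapped into the centre and that the quotient map to $\pi_1(\Sigma)^l$ respects the product decomposition up to permutation. This uses that hyperbolic surface groups have trivial centre and that non-cyclic commuting subgroups split along factors. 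Once $f$ is a product of factor maps $f_j$, we get $\deg f=\prod_j\deg f_j$, with $\deg f_j$ computed as in Lemma \ref{l:degreecircle}. The Euler number relation $\hat e(\text{target})=\deg\cdot\hat e(\text{source})$ (after accounting for the degree on $\Sigma$, which must be $\pm1$ by simplicial volume, Lemma \ref{l:functorial}) restricts each factor degree to a set like $\{0,1,d_j\}$. Permutations of factors are handled by the ordering $d_1\le\dots\le d_l$, comparing Euler numbers.

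Finally, for the lower bound we realise each $\prod_{j\in S}d_j$ by choosing, on factor $j\in S$, the covering of degree $d_j$ along the fibre, and on factor $j\notin S$ a degree-one map. The factors should be arranged so that a degree-one choice exists; if necessary we replace each factor pair by $(K_{d_j}\# K_{d_j d'},\,K_{d_j d'})$-type blocks with suitable Euler numbers tuned via Lemma \ref{l:degreecircle}. Constant maps give $0$. The delicate tuning of Euler numbers so that exactly $\{0,1,d_j\}$ arise per factor, and no cross terms arise, is the step we expect to require the most care.
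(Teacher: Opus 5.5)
There is a genuine gap, and it sits exactly at the case $l=1$. You never produce $3$-manifolds $Q,P$ with $D(Q,P)=\{0,1,d\}$, and the blocks you suggest cannot give this set.

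Suppose the target is a single circle bundle $K_f$. It is aspherical, so Lemma \ref{l:degreeconnected} holds with \emph{equality}: $D(\#_iX_i,K_f)=\sum_iD(X_i,K_f)$. For sources built from the $K_e$'s, Lemma \ref{l:degreecircle} says each $D(K_e,K_f)$ has at most one non-zero element. So if $1$ and $d$ come from different summands, $d+1$ is forced. Your ``tuned'' block $(K_{d_j}\#K_{d_jd'},K_{d_jd'})$ again gives $\{0,d'\}+\{0,1\}=\{0,1,d',d'+1\}$. No Euler-number bookkeeping between two circle bundles over $\Sigma$ can leave exactly three degrees, since such sets have at most two elements.

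The missing idea is to make the \emph{target} a connected sum and use Lemma \ref{connectedsumdegrees} to intersect degree sets. Take $q>d$ prime, $Q=(\#_dK_q)\#K_d\#K_{d^2}$ and $P=K_q\#K_{d^2}$. Then
$D(Q,P)\subseteq D(Q,K_q)\cap D(Q,K_{d^2})=\{0,1,\dots,d\}\cap\{0,1,d,d+1\}=\{0,1,d\}$.
The reverse inclusion comes from the degree-one pinch maps $Q\to P$ and $Q\to(\#_dK_q)\#K_d$, composed with the $d$-fold covering $(\#_dK_q)\#K_d\to P$ (Theorem \ref{1,k}).

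This also undercuts your upper-bound strategy for the products. Once the factors are such connected sums, $M=\prod_jQ_j$ and $N=\prod_jP_j$ are not aspherical. So the argument via $f_*$ on a product of central extensions (centres to centres, splitting up to permutation) is unavailable. The paper instead takes these products and invokes generalisations of the product-degree results of \cite{Ne1}, worked out in the last section of \cite{NWW}. These results give $D(\prod_jQ_j,\prod_jP_j)$ as the set of products of elements of the $D(Q_j,P_j)=\{0,1,d_j\}$, which is exactly the stated set.

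You would need a product formula of this kind for these non-aspherical factors; your group-theoretic splitting sketch does not supply it. Your lower-bound step is fine once the right blocks are in place: use the $d_j$-fold maps on the factors in $S$, degree-one maps elsewhere, and constant maps for $0$.
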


The case $l=1$ of Theorem \ref{productdegrees} result is crucial for its proof:

\begin{thm}\label{1,k}
For any integer $d>1$, there exist $3$-manifolds $Q$ and $P$ such that $D(Q,P)=\{0,1,d\}$. 
\end{thm}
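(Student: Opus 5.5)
The plan is to build $Q$ and $P$ as closed aspherical $3$-manifolds whose maps are controlled by the circle-bundle rigidity of Lemma~\ref{l:degreecircle}. Connected sums will not be used. Lemma~\ref{l:degreeconnected} gives $D(M_1,N)+D(M_2,N)\subseteq D(M_1\#M_2,N)$, so any connected-sum construction that realises $1$ and $d$ would also realise $d+1$. Hence $Q$ must be irreducible, and the two nonzero degrees $1$ and $d$ must come from two genuinely different, mutually exclusive ways of mapping $Q$ onto $P$.

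\textbf{Step 1: a single pair of circle bundles does not suffice.} Let $Q\to\Sigma_g$ and $P\to\Sigma_h$ be circle bundles with Euler numbers $e$ and $e'$, where $g\ge h\ge 2$. A map of nonzero degree is homotopic to a fibre-preserving map; this follows from Lemma~\ref{l:fund} and the fact that the centre of $\pi_1$ is carried by the fibre. Such a map has a base degree $b$ and a fibre degree $t$, with $\deg=tb$ and $te=be'$. Proposition~\ref{p:hsur} bounds $|b|\le (g-1)/(h-1)$. Eliminating $t$ gives $\deg=b^2e'/e$, so the set of degrees is $\{b^2e'/e\}$ over the admissible $b$. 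This recovers Lemma~\ref{l:degreecircle}, and it can cut away large degrees, for instance $e=d$, $e'=1$ with bound strictly between $d$ and $2d$ gives $\{0,d\}$. But $\{0,1,d\}$ is realised only when $d$ is a square. This computation is the engine, and it shows that a second ingredient is needed.

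\textbf{Step 2: the construction.} Take $P$ to be a graph manifold made of two Seifert pieces $P_A,P_B$, each a circle bundle over a hyperbolic surface with boundary, glued along tori. Take $Q$ to be a graph manifold with pieces $Q_A,Q_B$, chosen so that up to homotopy there are exactly two combinatorial types of nonzero-degree maps $Q\to P$.
\begin{itemize}
\item[(i)] An ``identity-type'' map of degree $1$, sending $Q_A\to P_A$ and $Q_B\to P_B$ homeomorphically.
\item[(ii)] A ``covering-type'' map of degree $d$, sending, say, $Q_A\cup Q_B$ onto a single piece or across pieces via a $d$-fold fibre covering.
\end{itemize}
The Euler numbers and genera are chosen so that any other combination is forbidden. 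The tools for this are JSJ rigidity for non-zero degree maps between graph manifolds (maps can be homotoped to respect the JSJ decomposition up to pinching pieces), the relation $te=be'$ on each piece, and the simplicial-volume bound on base degrees from Proposition~\ref{p:hsur}, applied piecewise via base orbifold Euler characteristics. The degree of the map is the common value $t_v b_v$ over the pieces mapped with nonzero degree. The gluing matrices must therefore make the fibre and base data on adjacent pieces compatible only for $\deg\in\{1,d\}$.

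\textbf{Step 3: verification, and the main obstacle.} Realising $1$ and $d$ is easy once the pieces are built: write down the homeomorphism and the fibrewise $d$-fold cover. The main obstacle is the exclusion step, namely proving that no map has degree outside $\{0,1,d\}$, and in particular excluding $d+1$ and $-1$. First I would establish, via Lemma~\ref{l:fund}, that a nonzero-degree map induces a finite-index image on each piece group and must respect fibres. Then I would enumerate the possible assignments of $Q$-pieces to $P$-pieces. For each assignment, the Euler-number equations together with the simplicial-volume bounds should force $(b,t)$ into a finite list whose products are $1$ or $d$. To kill $-1$ and $-d$, I would use the sign of the Euler numbers, which is orientation-sensitive as in $D(K_i,K_j)=\{0,j/i\}$. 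If the graph-manifold bookkeeping becomes unmanageable, my fallback is to make $P$ a single circle bundle over a surface. In that case $Q$ would be a Seifert manifold with an orbifold base whose cone points permit exactly two admissible base degrees, $b=1$ and $b=d$, with the matching fibre degree $t=1$ in both cases.
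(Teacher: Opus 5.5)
There is a genuine gap. You never actually produce $Q$ and $P$ or prove the exclusion of other degrees, and the reason you give for discarding connected sums is false. That discarded route is exactly where the proof lives.

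\textbf{Why connected sums are not ruled out.} The inclusion $D(M_1,N)+D(M_2,N)\subseteq D(M_1\#M_2,N)$ forces $d+1$ only when the degrees $1$ and $d$ come from maps defined on \emph{different} summands into the \emph{same} target. If the target is itself a connected sum, Lemma~\ref{connectedsumdegrees} gives $D(Q,N_1\#N_2)\subseteq D(Q,N_1)\cap D(Q,N_2)$. Intersecting two additive sets then removes the unwanted sums. Concretely, for a prime $q>d$ take $Q=(\#_dK_q)\#K_d\#K_{d^2}$ and $P=K_q\#K_{d^2}$.
\begin{itemize}
\item[(i)] Lemmas~\ref{l:degreeconnected} and~\ref{l:degreecircle} give $D(Q,K_q)=\{0,1,\dots,d\}$ and $D(Q,K_{d^2})=\{0,1,d,d+1\}$, so $D(Q,P)\subseteq\{0,1,d\}$.
\item[(ii)] Degree $1$ is a pinch map $Q\to P$.
\item[(iii)] Degree $d$ is the pinch $Q\to(\#_dK_q)\#K_d$ followed by the $d$-fold covering $(\#_dK_q)\#K_d\to K_q\#K_{d^2}$ induced by $K_d\to K_{d^2}$.
\end{itemize}
Both non-zero degrees come from maps defined on all of $Q$, so nothing forces $d+1$, and $Q$ need not be irreducible.

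\textbf{Why your replacement does not supply a proof.} Step 2 specifies no pieces, Euler numbers or gluing maps. Step 3 defers the entire exclusion argument, which is the content of the theorem, to an enumeration that ``should'' work.

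The design is also inconsistent as written. A map restricting to homeomorphisms $Q_A\to P_A$ and $Q_B\to P_B$ is a homeomorphism $Q\to P$, so you would need a self-map of $P$ of degree $d>1$.
\begin{itemize}
\item[(i)] If the fibres do not match across the gluing torus, $P$ is a non-geometric graph manifold. Its non-zero degree self-maps have degree $\pm1$: they are $\pi_1$-injective, and such manifolds do not cover themselves non-trivially. This is the $3$-dimensional counterpart of Theorem~\ref{t:NeHopf}.
\item[(ii)] If the fibres do match, $P$ is a circle bundle over a closed surface and your Step 1 applies.
\end{itemize}

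The fallback fails by your own (correct) Step 1 computation. Pull back to a finite manifold cover of the base orbifold of $Q$. This shows that a fibre-preserving map $Q\to P$ with base degree $b$ and fibre degree $t$ satisfies $t\,e(Q)=b\,e(P)$, with rational Euler numbers. Its degree is therefore $b^2e(P)/e(Q)$, and $\{1,d\}\subseteq D(Q,P)$ again forces $d$ to be a square. Your specific requirement $t=1$ for both $b=1$ and $b=d$ is worse still: it gives $e(Q)=e(P)=d\,e(P)$, hence $e(P)=0$. Then $P=S^1\times\Sigma_h$ has self-maps of every degree, and $D(Q,P)=\Z$ as soon as $1\in D(Q,P)$.
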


A straightforward, but still essential, observation for the proof of Theorem \ref{1,k} is the following:

\begin{lem}\cite[Lemma 4.3]{NWW}\label{connectedsumdegrees}
Let $M,N_1$ and $N_2$ be $n$-manifolds. Then 
\begin{equation}\label{eq:conntarget}
D(M,N_1\#N_2)\subseteq D(M,N_1)\cap D(M,N_2).
\end{equation}
\end{lem}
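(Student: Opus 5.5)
The plan is to push a degree through the pinch map onto each summand of the target. Since the target $N_1\#N_2$ is built from both summands, collapsing one of them should give degree-one maps onto the other. Any degree realised into $N_1\#N_2$ then becomes a degree realised into each $N_i$.

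\emph{Constructing the collapse maps.} Write $N_1\#N_2=(N_1\setminus \mathring{D}_1)\cup_{S^{n-1}}(N_2\setminus\mathring{D}_2)$, where $D_i\subset N_i$ are closed $n$-balls. Define $q_1\colon N_1\#N_2\to N_1$ as follows.
\begin{itemize}
\item On $N_1\setminus\mathring{D}_1$ it is the identity.
\item It sends the complementary piece $N_2\setminus\mathring{D}_2$ into the ball $D_1$.
\end{itemize}
To build the second part, choose a map $N_2\setminus\mathring{D}_2\to D_1$ that restricts to the identification $\partial D_2=S^{n-1}=\partial D_1$ on the boundary. One concrete choice is to collapse a spine: $N_2\setminus\mathring{D}_2$ deformation retracts onto its $(n-1)$-skeleton, so we may send everything outside a collar of the boundary to the centre of $D_1$. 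Alternatively, note that $D_1$ is contractible, so the boundary map extends; any extension will do. Define $q_2$ symmetrically.

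\emph{Degree of the collapse maps.} We check that $\deg(q_1)=1$ by counting local degree at a regular value $y$ chosen in the interior of $N_1\setminus D_1$. The preimage $q_1^{-1}(y)$ is a single point, and $q_1$ is the identity near it with compatible orientations. Here we use that the connected sum orientation is chosen so that both $N_i\setminus\mathring{D}_i$ inherit their orientations. Hence $\deg(q_1)=1$, and likewise $\deg(q_2)=1$.

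\emph{Conclusion.} Let $d\in D(M,N_1\#N_2)$, realised by $f\colon M\to N_1\#N_2$. By multiplicativity of degree under composition,
\[
\deg(q_i\circ f)=\deg(q_i)\deg(f)=d .
\]
Hence $d\in D(M,N_i)$ for $i=1,2$, which proves (\ref{eq:conntarget}).

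\emph{Main obstacle.} The only step needing care is the orientation and local-degree check for $q_i$. This is routine because $q_i$ is a homeomorphism over an open set of the target, away from the collapsed region.
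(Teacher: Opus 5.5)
Your proof is correct and follows essentially the same route as the paper: the paper, too, composes $f$ with degree-one pinch maps $N_1\#N_2\to N_i$ that collapse the other summand. This is what its remark that ``the proof of'' this lemma yields degree one maps $Q\to Q_1$ and $Q\to P$ refers to, and your local-degree check at a point of $N_1\setminus D_1$ supplies the detail it leaves implicit.
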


The content of the proof of Theorem \ref{1,k} is fundamental for the proof of part (c) of Theorem \ref{t:main} and thus we provide it for completeness:

\begin{proof}[Proof of Theorem \ref{1,k}]
Let $q>d$ be a prime number, and consider the following connected sums, where, as in Section \ref{s:proofrealizability}, $K_i$ denotes the $S^1$-bundle over a fixed hyperbolic surface with Euler number $i$:
\[
Q=(\#_d K_q)\# K_d\#K_{d^2} 
 \]
 and
 \[
 P= K_q\#K_{d^2}.
 \]
 Let $Q_1=(\#_dK_q)\#K_d$. By Lemma \ref{l:degreecircle}, $K_d$ is a $d$-fold covering of $K_{d^2}$, and so we obtain a covering 
 \begin{equation}\label{eq.d-fold}
Q_1=(\#_dK_q)\#K_d\to K_q\#K_{d^2}=P
\end{equation}
of degree $d$. 
Note that
\[
Q=P\#(\#_{d-1}K_q)\#K_{d}=
Q_1\#K_{d^2}.
\]
By (the proof of) Lemma \ref{connectedsumdegrees}, there are degree one maps $Q\to Q_1$ and $Q\to P$. Together with (\ref{eq.d-fold}), we deduce 
\begin{equation}\label{eq.inclusion1}
\{0,1,d\}\subseteq D(Q,P).
\end{equation}

We will now show the converse inclusion. Lemma \ref{connectedsumdegrees} implies that
\begin{equation}\label{eq.subsetintersection}
 D(Q,P)\subseteq D(Q,K_q)\cap D(Q,K_{d^2}).
 \end{equation}
Since $K_q$ is aspherical, in particular $\pi_2(K_q)=0$,  Lemma \ref{l:degreeconnected} implies that
 \[
 D(Q,K_q)=\sum^{d}D(K_q,K_q)+D(K_d,K_q)+D(K_{d^2},K_q).
 \]
Since $d$ and $q$ are coprime, Lemma \ref{l:degreecircle} implies that
\[
D(K_q,K_q)=\{0,1\},
\]
\[D(K_d,K_q)=D(K_{d^2},K_q)=\{0\},
\] 
and thus 
\begin{equation}\label{eq.QKq}
D(Q, K_q)=\{0,1,...,d\}. 
\end{equation}
Applying the same argument we obtain
\begin{equation}\label{eq.QKd^2}
D(Q,K_{d^2})=\{0,1,d,d+1\}. 
\end{equation}
Then by (\ref{eq.subsetintersection}), (\ref{eq.QKq}) and (\ref{eq.QKd^2}) we have
\begin{equation}\label{eq.inclusion2}
D(Q,P)\subseteq\{0,1,...,d\}\cap\{0,1,d,d+1\}=\{0,1,d\}.
\end{equation}

The theorem follows by (\ref{eq.inclusion1}) and (\ref{eq.inclusion2}).
\end{proof}

Now, with Theorem \ref{1,k} in hands, one can apply generalisations of results of~\cite{Ne1} about mapping degree sets between products and conclude the proof of Theorem \ref{productdegrees}; we refer the reader to the last section of~\cite{NWW} for the details, and continue with the proof of part (c) of Theorem \ref{t:main}.

\medskip

Clearly, Lemma \ref{connectedsumdegrees} holds with any sequence of manifolds $N_1,...N_k$, i.e.,
\[
D(M,\#_{i=1}^kN_i)\subseteq \bigcap_{i=1}^kD(M,N_i).
\]
However, equality fails in general as illustrated by the following examples:

\newpage

\begin{ex}\label{ex:connectedsumcounter}\
\begin{itemize}
\item[(i)] We saw in Section \ref{examples} that $D(T^2,T^2)=\Z$ and $D(T^2,T^2\#T^2)=\{0\}$.
\item[(ii)] There is a degree 2 covering $S^3\to\R P^3$, however any map $S^3\to\R P^3\#\R P^3$ has degree zero by Lemma \ref{l:fund}.
\end{itemize}
\end{ex}

Example \ref{ex:connectedsumcounter} indicates furthermore that equality cannot be achieved even if one is willing to glue maps and consider $D(M_1\#M_2,N_1\#N_2)$ on the left-hand side of \eqref{eq:conntarget}. Nevertheless, gluing is possible if one deals with $\pi_1$-surjective maps, and thus we have the following:

\begin{lem}\cite{RW,SWWZ}\label{l:WR}
Let $f_i\colon M_i\to N_i$, $i=1,...,k$, be $\pi_1$-surjective maps of degree $d$ between $n$-manifolds, $n\geq3$. Then there is a $\pi_1$-surjective map $f\colon \#_{i=1}^kM_i\to \#_{i=1}^kN_i$ of degree d.
\end{lem}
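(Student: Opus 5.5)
We prove Lemma \ref{l:WR} by induction on $k$, which reduces everything to the case $k=2$. For $k=1$ there is nothing to prove. In the inductive step we apply the $k=2$ case to the pair $\#_{i=1}^{k-1}M_i\to\#_{i=1}^{k-1}N_i$ and $f_k\colon M_k\to N_k$; both maps are $\pi_1$-surjective of degree $d$, so the resulting map is again $\pi_1$-surjective of degree $d$.

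So fix $f_1\colon M_1\to N_1$ and $f_2\colon M_2\to N_2$. We pick small balls $B_i\subset N_i$ and try to homotope $f_i$ until $f_i^{-1}(B_i)$ is a single ball $D_i\subset M_i$, mapped homeomorphically onto $B_i$ with local degree $\mathrm{sign}(d)$. Once this holds, we remove the interiors of $D_i$ and $B_i$ and glue the restrictions $M_i\setminus \mathrm{int} D_i\to N_i\setminus\mathrm{int} B_i$ along the boundary spheres. We orient the gluing so that the boundary identifications match; this uses that both local degrees carry the same sign. The result is a map $f\colon M_1\# M_2\to N_1\# N_2$. Its degree can be read off over a regular value lying in $N_1\setminus B_1$, so $\deg f=d$. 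For $\pi_1$-surjectivity we use $n\geq 3$: van Kampen gives $\pi_1(M_1\#M_2)=\pi_1(M_1)*\pi_1(M_2)$ and $\pi_1(N_1\#N_2)=\pi_1(N_1)*\pi_1(N_2)$, because removing a ball does not change $\pi_1$. The map $f$ respects these free product decompositions, so it is surjective because each $f_{i*}$ is.

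The main obstacle is the normalisation step: making the preimage of a ball a single ball. We first homotope $f_i$ to be transverse to a point $y\in N_i$, so that $f_i^{-1}(y)$ is finite with signed count $d$. We then cancel preimage points in pairs of opposite sign, and merge points of the same sign, until a single point of local degree $\pm1$ remains. For $|d|>1$ we cannot reach one preimage point by cancellation alone. Instead we merge points of the same sign into one point at which $f_i$ is a branched model of local degree $d$, that is, the cone on a degree-$d$ self-map of $S^{n-1}$. The gluing still works with this model, since we can glue along boundary spheres mapped with degree $d$. To do this we would cone off the corresponding maps $S^{n-1}\to S^{n-1}$ of degree $d$ and choose both cones equal to the same standard degree-$d$ map.

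The merging is where $\pi_1$-surjectivity enters. Take two preimage points $x,x'$ and join them by an arc $\gamma$ in $M_i$. Its image $f_i(\gamma)$ is a loop at $y$. By surjectivity of $f_{i*}$ we can choose $\gamma$ so that this loop is null-homotopic in $N_i$: precompose $\gamma$ with a loop in $M_i$ whose image is the inverse class. Because $n\geq3$, we may then homotope $f_i$ in a neighbourhood of $\gamma$ so that $f_i$ maps all of $\gamma$ to $y$. Collapsing a regular neighbourhood of $\gamma$ then merges $x$ and $x'$ into a single preimage point, and the local degrees add. This is the standard Hopf--Kneser-type argument, and we would follow the cited references \cite{RW,SWWZ} for the technical details of this homotopy.
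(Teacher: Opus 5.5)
Your proposal is correct and is essentially the argument behind the citation. The paper gives no proof of its own, and in \cite{RW,SWWZ} one likewise first homotopes each $f_i$ so that the preimage of a small ball is a single ball mapped onto it by a standard degree-$d$ model, obtained by merging preimage points along arcs with null-homotopic image, which is where $\pi_1$-surjectivity and $n\geq3$ are used; one then glues the restrictions to the complements and reads off the degree and the $\pi_1$-surjectivity via van Kampen. One small correction: your opening normalisation (a ball mapped homeomorphically with local degree $\mathrm{sign}(d)$) is possible only when $|d|=1$, and the cone model you switch to afterwards is the correct normal form in general.
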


Combining the maps of Lemma \ref{l:degreecircle} with Lemma \ref{l:WR} in order to realise more degrees cannot be done directly, since the maps of Lemma \ref{l:degreecircle} are covering maps and thus not $\pi_1$-surjective. Hence, one needs to manipulate the gluing of these maps with some additional structure on the domain so that $\pi_1$-surjectivity is achieved.  But then one needs to be careful so that the new manifold does not introduce new mapping degrees. Since $\pi_1$-surjectivity is a purely group theoretic condition, free groups, and thus connected sums with copies of $S^{n-1}\times S^1$ at the topological level, seems to be a very promising modification. This is especially intriguing for the 3-manifold case, since any map from $S^2\times S^1$ to the manifolds of Lemma \ref{l:degreecircle} has degree zero (recall that the manifolds of Lemma \ref{l:degreecircle} are aspherical). Indeed, the following proposition shows the general case using Lemmas \ref{l:degreeconnected} and \ref{connectedsumdegrees}:

\begin{prop}\cite[Prop. 3.7]{CMV}\label{CMV}
Let $M_i$, $N_i$, $i = 1, . . . , r$, be $n$-manifolds, $n\geq 3$ so that
\begin{itemize}
\item[(1)] $\pi_{n-1}(N_i) = 0$ for $i = 1,...,r$;
\item[(2)] $D(M_i,N_j) = \{0\}$ for $i\neq j$;
\item[(3)] $D(M_i,N_i)\cap D(M_j,N_j)$ is finite for $i\neq j$.
\end{itemize}
Then there exists an integer $l \geq 0$ such that
\[
D((\#_{j=1}^rM_j)\#(\#^l(S^{n-1}\times S^1)),\#_{i=1}^rN_j)={\bigcap_{i=1}^r}D(M_i,N_i).
\]
\end{prop}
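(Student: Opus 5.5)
Write $M_l=(\#_{j=1}^rM_j)\#(\#^l(S^{n-1}\times S^1))$ and $N=\#_{i=1}^rN_i$. The proof splits into the two inclusions.

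\emph{The upper bound $D(M_l,N)\subseteq\bigcap_i D(M_i,N_i)$.} This holds for every $l\geq 0$. By Lemma \ref{connectedsumdegrees},
\[
D(M_l,N)\subseteq\bigcap_i D(M_l,N_i).
\]
Fix $i$. Since $\pi_{n-1}(N_i)=0$, Lemma \ref{l:degreeconnected} applies and gives
\[
D(M_l,N_i)=\sum_j D(M_j,N_i)+\sum^l D(S^{n-1}\times S^1,N_i).
\]
Two observations simplify this sum.
\begin{itemize}
\item Any map $S^{n-1}\times S^1\to N_i$ restricted to $S^{n-1}$ is null-homotopic because $\pi_{n-1}(N_i)=0$. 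Hence it factors up to homotopy through the quotient $S^{n-1}\times S^1\to S^1\vee S^n$, and the $S^n$ part is also trivial on fundamental classes. I would rather argue that the map factors, up to homotopy, through something of dimension less than $n$, so its degree is $0$.
\item By hypothesis (2), $D(M_j,N_i)=\{0\}$ for $j\neq i$.
\end{itemize}
Therefore $D(M_l,N_i)=D(M_i,N_i)$, and intersecting over $i$ gives the inclusion.

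\emph{The lower bound, for a suitable $l$.} By hypothesis (3), $A:=\bigcap_i D(M_i,N_i)$ is finite, so it suffices to realise each $d\in A\setminus\{0\}$ by a map $M_l\to N$. For fixed $d$ and each $i$, choose $g_i\colon M_i\to N_i$ of degree $d$. Then make $g_i$ $\pi_1$-surjective by passing to $M_i\#(\#^{s}(S^{n-1}\times S^1))$, with $s$ the number of generators of $\pi_1(N_i)$. Concretely, form the connected sum and use the degree-one pinch onto $M_i$. Then send the extra $S^1$-factors to loops representing the generators of $\pi_1(N_i)$, and send the $S^{n-1}$-factors to a point near a basepoint. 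This gives a map of degree $d$ that is surjective on $\pi_1$; the construction is the one used in Lemma \ref{connectedsumdegrees} combined with mapping wedge summands.

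Lemma \ref{l:WR} then glues these into a $\pi_1$-surjective map
\[
\#_i\bigl(M_i\#\#^{s_i}(S^{n-1}\times S^1)\bigr)\to N
\]
of degree $d$. Taking $l\geq\sum_i s_i$ works uniformly for all $d$, since $s_i$ depends only on $N_i$. Surplus $S^{n-1}\times S^1$ summands are absorbed by pinching them to a point, which preserves both degree and $\pi_1$-surjectivity. For $d=0$ the constant map suffices. Hence $A\subseteq D(M_l,N)$.

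The main obstacle I expect is the precise degree-zero claim for $S^{n-1}\times S^1\to N_i$. I would handle it with the obstruction-theoretic argument: the map on the $(n-1)$-skeleton $S^1\vee S^{n-1}$ can be homotoped to factor through $S^1$, using $\pi_{n-1}(N_i)=0$ together with the action of $\pi_1$. The top cell is then a map $S^n\to N_i$ up to the $S^1$-part. Lifting to the cover corresponding to the image of $\pi_1(S^1)$, which is infinite unless trivial, one gets degree $0$ via Lemma \ref{l:fund} when $\pi_1(N_i)$ is infinite, and via homotopy when it is finite. A cleaner route I would try first is to note that the map kills the spherical class $[S^{n-1}]$, and that by Poincaré duality a nonzero degree would force $H_{n-1}(f;\Q)$ to be surjective onto $H_{n-1}(N_i;\Q)$, via Lemma \ref{l:homol} and transfer arguments.
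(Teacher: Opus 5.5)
Your lower bound is sound and follows the route the paper indicates. You add $S^{n-1}\times S^1$ summands mapped onto generators of $\pi_1(N_i)$ to make each degree-$d$ map $\pi_1$-surjective, then glue with Lemma \ref{l:WR}. As a bonus, this gives an $l$ independent of $d$.

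The gap is in the upper bound, at the claim that every map $S^{n-1}\times S^1\to N_i$ has degree $0$ because $\pi_{n-1}(N_i)=0$. This claim is false. We have $\pi_{n-1}(S^n)=0$, yet collapsing the $(n-1)$-skeleton $S^1\vee S^{n-1}$ gives a degree-one map $S^{n-1}\times S^1\to S^n$. Composing with the covering $S^3\to L(p,q)$ gives maps $S^2\times S^1\to L(p,q)$ of degree $p$, although $\pi_2(L(p,q))=0$.

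Your factorization through $S^1\vee S^n$ is correct, but it only shows that $\deg f$ is the degree of some map $S^n\to N_i$. That degree vanishes by Lemma \ref{l:fund} when $\pi_1(N_i)$ is infinite, but not in general when $\pi_1(N_i)$ is finite. The finite case is exactly the one your obstacle paragraph dismisses with ``via homotopy''. Note also that the image of $\pi_1(S^1)$ can be a nontrivial finite cyclic group, not only trivial or infinite. Your homological route cannot close this case either: it only yields $H_{n-1}(N_i;\Q)=0$, which holds automatically when $\pi_1(N_i)$ is finite.

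What is missing is that the $S^{n-1}\times S^1$ summands must be controlled by more than hypothesis (1). There are two ways to do this.

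\begin{itemize}
\item[(a)] For $r\geq 2$, pick $j\neq i$. Precomposing with the degree-one pinch $M_j\to S^n$ gives $D(S^n,N_i)\subseteq D(M_j,N_i)=\{0\}$ by hypothesis (2). Hence $D(S^{n-1}\times S^1,N_i)=\{0\}$ after all.
\item[(b)] Alternatively, and this also covers $r=1$, do not aim for vanishing at all. Your factorization gives $D(S^{n-1}\times S^1,N_i)\subseteq D(S^n,N_i)$. Lemma \ref{l:degreeconnected} gives $D(M_i,N_i)+D(S^n,N_i)\subseteq D(M_i\# S^n,N_i)=D(M_i,N_i)$. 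So these summands are absorbed.
\end{itemize}

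With either repair you obtain $D(M_l,N_i)=D(M_i,N_i)$ for every $l$, and the rest of your argument goes through.
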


Hence, one wishes to express an arbitrary finite set as an intersection of sums of sets $\{0,k\}$. This was done in \cite{CMV}: Given a finite set $B$ of non-zero integers, set
\[
S_B=\sum_{b\in B}\{0,b\}.
\]

\begin{ex}
Let $B=\{2,2,3\}$. Then
\[
S_B=\{0,2\}+\{0,2\}+\{0,3\}=\{0,2,3,4,5,7\}.
\]
\end{ex}

The main arithmetic combinatorial result in \cite{CMV} is the following:

\begin{prop}\cite[Prop. 2.2]{CMV}\label{p:CMV}
Let $d_1, . . . , d_n$ be pairwise distinct non-zero integers. There exist finite sequences $B_i$, $i = 0, . . . , n$, of non-zero integers, such that
\[
\{0,d_1,...,d_n\}=\bigcap_{i=0}^nS_{B_i}.
\]
\end{prop}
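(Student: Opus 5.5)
The plan is to realise $\{0,d_1,\dots,d_n\}$ as an intersection of just two sets $S_B$: one bounded set containing all the $d_i$, and one set that agrees with $\{0,d_1,\dots,d_n\}$ near the origin and has all its other elements far away. The remaining $B_i$ are then taken to be repeats, since $S\cap S=S$.

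\emph{The bounded set.} Set
\[
B_0=(d_1,\dots,d_n),\qquad C=\sum_{j=1}^n|d_j|.
\]
Then $S_{B_0}$ is the set of subset sums of the $d_j$. It contains $0,d_1,\dots,d_n$, and every element has absolute value at most $C$.

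\emph{The set that cuts out the singletons.} Fix an integer $N>2C$ and take
\[
B_1=(N,\ d_1-N,\ d_2-N,\ \dots,\ d_n-N).
\]
Every entry is non-zero: $N>0$, and $|d_j|\le C<N$ gives $d_j-N\neq 0$. A general element of $S_{B_1}$ has the form
\[
\varepsilon N+\sum_{j\in T}(d_j-N)=(\varepsilon-|T|)N+\sum_{j\in T}d_j,
\]
where $\varepsilon\in\{0,1\}$ and $T\subseteq\{1,\dots,n\}$.
\begin{itemize}
\item Taking $\varepsilon=0$, $T=\emptyset$ gives $0$.
\item Taking $\varepsilon=1$, $T=\{j\}$ gives $d_j$.
\item Otherwise $m:=\varepsilon-|T|\neq 0$. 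Then the element has absolute value at least $|m|N-C\ge N-C>C$, so it is not in $S_{B_0}$.
\end{itemize}
Hence
\[
S_{B_0}\cap S_{B_1}=\{0,d_1,\dots,d_n\}.
\]
We have $0\in S_{B_0}$ and each $d_j\in S_{B_0}$ as single-term sums, which gives "$\supseteq$". The case analysis above gives "$\subseteq$".

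\emph{Filling out the indexing.} For $i=2,\dots,n$ set $B_i=B_1$ (or $B_0$). The intersection over $i=0,\dots,n$ is unchanged. Pairwise distinctness of the $d_i$ is not actually needed for the equality; it only ensures the displayed set has $n+1$ elements.

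The only step requiring care is the separation estimate $|m|N-C>C$. This is where the choice $N>2C$ is used: it guarantees that the "wrong" subset sums of $B_1$ are pushed beyond the range of $S_{B_0}$.
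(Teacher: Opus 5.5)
Your argument is correct, and it is a genuinely different and more economical construction than the one the paper sketches from \cite{CMV}.

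\textbf{The paper's route.} In the paper's example, $B_0$ consists of $\pm1$'s, so $S_{B_0}$ is a full integer interval containing $0,d_1,\dots,d_n$. Each further $B_i$ consists of one large prime $\pm q$ together with many $\mp1$'s. This makes $S_{B_i}$ a union of two intervals that omits exactly one gap between consecutive elements of $\{0,d_1,\dots,d_n\}$. The $n$ gaps are removed one at a time, which is where the count of $n+1$ sequences comes from.

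\textbf{Your route.} You need only two distinct sequences. The first is the $d_j$ themselves, whose subset sums stay in $[-C,C]$. The second is the shifted sequence $(N,d_1-N,\dots,d_n-N)$. Its subset sums can land in $[-C,C]$ only when the coefficient $\varepsilon-|T|$ of $N$ vanishes, i.e.\ when $N$ is cancelled by exactly one term $d_j-N$. Your bound $|m|N-C\ge N-C>C$ covers all other cases, including $\varepsilon=1$, $T=\emptyset$. The index range $0,\dots,n$ is then filled by repetition, which the statement allows.

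\textbf{What each buys.} Your version is shorter and uses short sequences, of lengths $n$ and $n+1$, rather than lengths growing with the $|d_j|$ and the auxiliary primes. It therefore gives far fewer connected summands in the realising $3$-manifolds. The interval-and-gap construction is more explicit about the shape of each $S_{B_i}$ and produces the $n+1$ sets naturally.

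If you feed your $B_i$ into the topological step, choose the primes $q_i$ there pairwise distinct and larger than every $|b|$ over all $B_j$. This guarantees $D(M_i,N_j)=\{0\}$ for $i\neq j$ in Proposition~\ref{CMV} for the repeated copies as well. Alternatively, apply that proposition with $r=2$ directly.
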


We illustrate Proposition \ref{p:CMV} below and refer the reader to~\cite{CMV} for the proof of the this proposition and for more details on the choice of the sets $B_i$.

\begin{ex}
Let $d_1=-2$, $d_2=3$. We have $-2<0<3$. Setting 
$
B_0=\{-1,-1,1,1,1\}
$
we obtain
\[
S_{B_0}=[-1,3]\cap\Z.
\]
Then, we have $\max\{3,3+2\}=5$ and set 
$
B_0=\{-1,-1,-1,-1,7\}.
$
We picked $7$ as a prime greater than $5$; also note that we have four $-1$'s because $4=7-3$. We compute
\[
S_{B_1}=([-4,0]\cup[3,7])\cap\Z.
\]
Finally, we have 
$\max\{2,2+3\}=5$ and set 
$
B_2=\{1,1,1,1,1,-7\}; 
$
again note that $5=7-3$ which is the number of $1$'s. Computing we obtain
\[
S_{B_2}=([-7,-2]\cup[0,5])\cap\Z.
\]
The intersection of the $S_{B_i}$'s is then
\[
\bigcap_{i=0}^2S_{B_i}=\{0,-2,3\}.
\]
\end{ex}

We are now ready to complete the proof of part (c) Theorem \ref{t:main} (Theorem A in~\cite{CMV}): Let $d_1,d_2,...,d_n$ be non-zero pairwise distinct integers. By Proposition \ref{p:CMV}, we can write
\[
\{0,d_1,...,d_n\}=\bigcap_{i=0}^nS_{B_i},
\]
for some finite sequences $B_i$, $i = 0, . . . , n$, of non-zero integers.

For each $i=0,1,...,n$, pick a prime $q_i$ such that
$$
q_i>\max_{b\in B_i}\{|b|\}
$$
and let
\[
\alpha_i=q_i\prod_{b\in B_i}b, \ i=0,1,...,n.
\]
Setting
\[
M_i=\#_{b\in B_i}K_{\alpha_i/b} \ \text{and} \ N_i=K_{\alpha_i},
\]
where $K_{p}$'s are the $S^1$-bundles of Lemma \ref{l:degreecircle},
we obtain by Lemma \ref{l:degreeconnected}
\[
D(M_i,N_i)=S_{B_i}, \ i=0,1,...,n
\]
and
\[
D(M_i,N_i)=\{0\} \ \text{for all} \ i\neq j.
\]
Hence, by Proposition \ref{CMV}, there is some $l>0$ such that
\[
D((\#_{i=0}^nM_i)\#(\#_l(S^2\times S^1)),\#_{i=0}^nN_i)=\bigcap_{i=0}^nS_{B_i}=\{0,d_1,...,d_n\}.
\]

\subsection{Proof of part (d)}

We will now indicate that any finite set containing zero is the mapping degree set between some $n$-manifolds, for each $n\geq4$. Having the case $n=3$ from part (c) of Theorem \ref{t:main}, we can conclude the proof for each $n\geq6$ by the following result:

\begin{thm}\cite[Theorem 3.3]{NSTWW1}\label{t:NSTWW}
For any $n$-manifolds $M$ and $N$ and any integer $k\geq3$, there are infinitely many hyperbolic $k$-manifolds $W$ such that
\[
D(M,N)=D(M\times W, N\times W).
\]
\end{thm}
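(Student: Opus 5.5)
The inclusion $D(M,N)\subseteq D(M\times W,N\times W)$ is immediate: given $f\colon M\to N$ of degree $d$, the product $f\times\mathrm{id}_W$ has degree $d$. All the work is in the reverse inclusion. The plan is to choose $W$ so rigid that every map $g\colon M\times W\to N\times W$ of non-zero degree is forced, up to homotopy, to act on the $W$-factor by a map $W\to W$ of degree $\pm1$. The degree of $g$ should then come entirely from a map $M\to N$.

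I would take $W$ to be a closed hyperbolic $k$-manifold, $k\geq3$, chosen to have the following properties.
\begin{itemize}
\item[(i)] $\|W\|>0$. This gives, by Lemma \ref{l:functorial}, $D(W,W)\subseteq\{0,\pm1\}$.
\item[(ii)] $\pi_1(W)$ is not a quotient of $\pi_1(M)$ or of any subgroup of finite index in $\pi_1(M)$ that is relevant below. Such $W$ can be arranged, for instance, by taking $W$ with large injectivity radius or large rank, or with no maps of non-zero degree from $M$-related pieces.
\item[(iii)] $W$ has trivial isometry group, so that $\mathrm{Out}(\pi_1(W))=1$ by Mostow rigidity.
\end{itemize}
Infinitely many such $W$ exist, because in each dimension $k\geq3$ there are infinitely many closed hyperbolic manifolds with trivial isometry group and arbitrarily large volume.

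Given $g$ of degree $d\neq0$, compose with the projection $p_W\colon N\times W\to W$. Restricting to a slice $\{x\}\times W$ produces a map $h\colon W\to W$. The key claim is $\deg(h)=\pm1$, and this claim is the hard step. I would argue via the Künneth decomposition of $g^*$ applied to the fundamental class $\omega_W$ of $W$. The pullback $(p_W\circ g)^*\omega_W$ decomposes as a sum of terms in $H^i(M)\otimes H^{k-i}(W)$. One needs the $H^0(M)\otimes H^k(W)$ term to be non-zero; that term is exactly $\deg(h)$. If instead all of its weight lies in terms with $i>0$, I would use the simplicial volume of products (Gromov's inequality $\|M\times W\|\geq\|M\|\,\|W\|$), or a $\pi_1$ argument through Lemma \ref{l:fund}, to show that $\pi_1(W)$ would have to be dominated by a finite index subgroup of $\pi_1(M)\times\pi_1(W)$ in a way that factors essentially through $\pi_1(M)$. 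Condition (ii) is designed to exclude this. This is the step where the choice of $W$ really matters, and where I expect the main obstacle to lie. My first attempt would use the factorization theorems of Kotschick--Löh type for maps into products whose factor has non-vanishing simplicial volume and is \emph{not} presented by products.

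Once $\deg(h)=\pm1$, the map $h$ is a homotopy equivalence by Mostow rigidity and (iii), since hyperbolic groups are Hopfian and every degree-one self-map of $W$ is $\pi_1$-surjective. After composing with the inverse of $h$, we may assume $p_W\circ g$ is homotopic to the projection onto $W$ on each slice. A cohomological computation then shows that $\deg(g)$ equals the degree of $p_N\circ g$ restricted to $M\times\{y\}$, up to a sign. Here one uses the Künneth formula again, now with the off-diagonal contributions killed by the same rigidity argument. This yields a map $M\to N$ of degree $\pm d$. Finally, to remove the sign ambiguity, I would either observe that $W$ admits an orientation-reversing self-homotopy equivalence, or compose with one on both sides, so that $d\in D(M,N)$ exactly. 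Combining the two inclusions gives $D(M,N)=D(M\times W,N\times W)$.
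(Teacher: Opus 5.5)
\textbf{There is a genuine gap in the step you flag as hard, and the final sign step is wrong.}

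Your overall plan is sound: a rigid hyperbolic $W$ with trivial isometry group, forcing the $W$-coordinate of $g$ to be the identity. The easy inclusion and the use of (i) and (iii) are fine. But the hard step is not carried out, and the tools you name do not do it.
Lemma \ref{l:fund} is about nonzero-degree maps between manifolds of equal dimension, whereas $p_W\circ g$ goes from dimension $n+k$ to $k$. Gromov's product inequality says nothing about which K\"unneth components of $(p_W\circ g)^*\omega_W$ survive. Condition (ii) puts the finite index on the wrong side: the danger is a homomorphism $\pi_1(M)\to\pi_1(W)$ with finite-index image, not a surjection from a subgroup of $\pi_1(M)$ onto all of $\pi_1(W)$. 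There is also no argument that large injectivity radius or rank prevents this.

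The missing idea is algebraic. The images of $\pi_1(M)\times 1$ and $1\times\pi_1(W)$ under $(p_W\circ g)_*$ commute in $\pi_1(W)$, where centralisers of nontrivial elements are infinite cyclic. Hence one of three cases holds:
(a) $\pi_1(M)$ maps trivially, so by asphericity $p_W\circ g\simeq h\circ p_W$ and $(p_W\circ g)^*\omega_W=\deg(h)\,(1\times\omega_W)$;
(b) both images are nontrivial, so they generate an abelian, hence cyclic, subgroup, and $p_W\circ g$ factors through $S^1$ up to homotopy and kills $H^k$ since $k\geq 2$;
(c) $\pi_1(W)$ maps trivially, so $p_W\circ g\simeq\psi\circ p_M$ for some $\psi\colon M\to W$, where $p_M$ is the projection to $M$.

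Since $\deg g\neq 0$, $g^*$ is injective on rational cohomology, so $g^*(1\times\omega_W)\neq 0$. This excludes (b) and gives $\deg h\neq 0$ in (a). Only (c) requires a choice of $W$. It is vacuous if $k>n$. Otherwise it is excluded once $\|W\|$ exceeds the $\ell^1$-norms of a basis of $H_k(M;\Z)$ modulo torsion: $\psi_*$ sends integral classes to integer multiples of $[W]$, so Lemma \ref{l:functorial} applies. So large volume, not injectivity radius or rank, is what you should require together with trivial isometry group, since $\|W\|$ is proportional to volume.

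Note also that the mixed K\"unneth terms vanish because (a) holds on all of $M\times W$. It is not Mostow rigidity of $h$ that kills them, since that only sees one slice; your ``same rigidity argument'' does not justify dropping them.

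Your final step is wrong. If $W$ had an orientation-reversing self-homotopy equivalence $r$, then $f\times r$ would give $-D(M,N)\subseteq D(M\times W,N\times W)$. The theorem would then fail whenever $D(M,N)$ is not symmetric, e.g.\ for $D(Q,P)=\{0,1,d\}$ from Theorem \ref{1,k}. Composing with $r$ on both sides changes no sign.

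What you need is the opposite, and you already have it. By (iii) and Mostow rigidity, the homotopy equivalence $h$ is homotopic to an isometry, hence to $\mathrm{id}_W$, so $\deg h=+1$. Then cupping $(p_N\circ g)^*\omega_N$ with $1\times\omega_W$ kills all but its $H^n(M)\otimes H^0(W)$-component. This gives $\deg g=\deg(h)\cdot\deg\bigl(p_N\circ g|_{M\times\{y\}}\bigr)\in D(M,N)$, with no sign ambiguity.
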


Clearly, Theorem \ref{t:NSTWW} cannot be used to prove the cases $n=4,5$ of part (d) of Theorem \ref{t:main}. In the remainder of this paragraph, we will summarize briefly the proof for these two dimensions and refer the reader to~\cite{NSTWW2} for the details.

\medskip

As in the case of $3$-manifolds, circle bundles will play again an instrumental role as building factors. The main contribution of~\cite{NSTWW2} concerns understanding maps of non-zero degree between circle bundles in a certain general setting in any dimension motivated by the $3$-dimensional case.

Given two circle bundles $E_1$ and $E_2$ of the same dimension, we define the {\em fiber-preserving  mapping degree set} as
\[
D_{FP}(E_1,E_2):=\{d\in\Z \ |\, \text{$\exists$ fiber-preserving map} \  f\colon E_1\to E_2 \ \text{such that}\ \deg( f)=d\}.
\]

The next result describes the mapping degree sets of fiber-preserving maps:

\begin{thm}\cite[Theorem 1.2]{NSTWW2}\label{main1} Let  $M_i$ be an $n$-manifold and let 
$E_i \to M_i$ be an $S^1$-bundle with Euler class $e_i\in H^2(M_i;\mathbb{Z})$, $i=1,2$. Then 
$$D_{FP}(E_1, E_2)=\{0\} \cup\{k\cdot \deg(f)\ |\  k\ne 0,  f\colon M_1\to M_2, \deg(f)\ne 0 \ \text{and}\ H^2(f)(e_2)=ke_1\},$$
where $H^2(f)\colon H^2(M_2;\mathbb{Z})\to H^2(M_1;\mathbb{Z})$ denotes  the induced homomorphism on cohomology.
\end{thm}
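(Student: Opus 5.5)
We prove the two inclusions separately. Throughout, a fiber-preserving map $F\colon E_1\to E_2$ covers a map $f\colon M_1\to M_2$ on the bases, and we analyse $\deg(F)$ through $f$ and through the restriction of $F$ to a fiber.

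\emph{Necessity.} Let $F$ be fiber-preserving with $\deg(F)\neq 0$, covering $f$. Restricted to a fiber, $F$ is a map $S^1\to S^1$, say of degree $k$; this $k$ is constant over $M_1$ because $M_1$ is connected. We compare fundamental classes using the Gysin sequence, or equivalently integration along the fiber $H^{n+1}(E_i)\to H^n(M_i)$. Naturality of the Gysin sequence under the bundle map $(F,f)$ should give $\deg(F)=k\cdot\deg(f)$. Since $\deg(F)\neq0$, both $k$ and $\deg(f)$ are non-zero.

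For the Euler class relation, note that $F$ factors through the pullback bundle $f^*E_2\to M_1$, whose Euler class is $H^2(f)(e_2)$. This gives a bundle map $E_1\to f^*E_2$ over the identity of $M_1$ which has degree $k$ on fibers. Fiberwise, this is the situation of the $k$-th power map of principal $S^1$-bundles, so one expects $H^2(f)(e_2)=k e_1$.

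The delicate point is that a fiber-preserving map need not be fiberwise equivariant. I would homotope it to one, using that $\mathrm{Map}(S^1,S^1)$ of degree $k$ deformation retracts onto the rotations composed with $z\mapsto z^k$. Then $E_1\to f^*E_2$ becomes an $S^1$-equivariant map for the $k$-th power action, which identifies $f^*E_2$ with the fiberwise quotient $E_1/\mathbb{Z}_k$. That quotient has Euler class $k e_1$, which is seen from the transition cocycle raised to the $k$-th power.

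\emph{Sufficiency.} Conversely, suppose $f\colon M_1\to M_2$ has $\deg(f)\neq 0$, $k\neq0$, and $H^2(f)(e_2)=ke_1$. Form the fiberwise $k$-th power map $E_1\to E_1/\mathbb{Z}_k$ (for negative $k$, compose with fiber inversion). The target is an $S^1$-bundle with Euler class $ke_1=e(f^*E_2)$, so it is isomorphic to $f^*E_2$, since $S^1$-bundles are classified by the Euler class. Composing with the canonical map $f^*E_2\to E_2$ produces a fiber-preserving map of degree $k\deg(f)$. Finally, $0\in D_{FP}(E_1,E_2)$ by composing the projection $E_1\to M_1$ with a section-free constant map to a fiber point, which is fiber-preserving.

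The main obstacle is the necessity direction: making the degree formula $\deg(F)=k\deg(f)$ and the equivariant homotopy rigorous when $M_i$ are arbitrary manifolds. If the equivariance argument becomes awkward, I would instead compute with obstruction theory. The Euler class is the primary obstruction to a section, and a degree-$k$ map on fibers multiplies this obstruction cocycle by $k$, which yields $f^*e_2=ke_1$ directly.
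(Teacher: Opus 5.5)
Your proposal is correct. The survey gives no proof of this statement and only cites \cite{NSTWW2}, so there is no in-paper argument to compare against. Your outline is the standard one: factor $F$ through $f^*E_2$, and use the fiber degree $k$ to get $\deg(F)=k\deg(f)$ and $H^2(f)(e_2)=ke_1$. Conversely, realise $k\deg(f)$ by the fiberwise $k$-th power map $E_1\to E_1/\mathbb{Z}_k\cong f^*E_2$, followed by the canonical map $f^*E_2\to E_2$. The isomorphism $E_1/\mathbb{Z}_k\cong f^*E_2$ uses that oriented circle bundles are classified by their Euler class.

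The one place where you work harder than necessary is the Euler class relation in the necessity direction; no equivariant homotopy is needed. Since $F$ covers $f$, it induces a morphism of Serre spectral sequences. This morphism is $f^*\otimes k$ on $E_2^{p,1}=H^p(M_i;H^1(S^1;\mathbb{Z}))$ and $f^*$ on $E_2^{p,0}$.
\begin{itemize}
\item The transgression $d_2\colon E_2^{0,1}\to E_2^{2,0}$ sends the fiber generator to the Euler class, so its naturality gives $ke_1=H^2(f)(e_2)$ at once.
\item Since $H^{n+1}(E_i;\mathbb{Z})\cong E_\infty^{n,1}=E_2^{n,1}$, the same morphism gives $\deg(F)=k\deg(f)$. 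This replaces your hedged ``should give'' with an actual argument.
\end{itemize}

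If you prefer to keep the deformation route, two checks are needed. First, reduce both structure groups to $\mathrm{SO}(2)$. Second, verify that the retraction of degree-$k$ self-maps of $S^1$ onto $\{z\mapsto az^k\}$ commutes with rotations of source and target. It does if you normalise by the mean of a lift; without this, the fiberwise deformation does not globalise over $M_1$.
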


Hence, one would gain a very good understanding of mapping degree sets between circle bundles if one is able to find a fiber-preserving map in the homotopy class of a non-zero degree map. This is the main result of~\cite{NSTWW2} in the following group theoretic setting:

\begin{thm}\cite[Theorem 1.1]{NSTWW2}\label{main2}
For $i=1,2$, let  $E_i$ be an $S^1$-bundle over an aspherical $n$-manifold $M_i$,   and let $ f\colon   E_1\to  E_2$ be a map.
If the induced map $f_*\colon\pi_1(E_1)\to\pi_1(E_2)$ sends the fiber subgroup of $\pi_1(E_1)$ to the fiber subgroup of $\pi_1(E_2)$ injectively, then there is a fiber-preserving map in the homotopy class of $f$.
\end{thm}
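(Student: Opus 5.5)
Write $\pi_1(E_i)=G_i$ and let $Z_i=\langle t_i\rangle\cong\Z$ be the fiber subgroup. The fiber is infinite cyclic because $M_i$ is aspherical, so $\pi_2(M_i)=0$ and the long exact sequence of the bundle gives $1\to Z_i\to G_i\to\pi_1(M_i)\to1$. Since $M_i$ is aspherical and the fiber is a $K(\Z,1)$, each $E_i$ is itself aspherical. Hence homotopy classes of maps $E_1\to E_2$ correspond to homomorphisms $G_1\to G_2$ up to conjugation. The plan is to use this to build, from $\phi=f_*$, a map of base spaces, and then a map of bundles over it that induces $\phi$ (up to conjugacy) on $\pi_1$. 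Asphericity of $E_2$ then forces this bundle map to be homotopic to $f$.

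\textbf{Step 1: descend to the bases.} By hypothesis $\phi(Z_1)\subseteq Z_2$, so $\phi$ induces $\bar\phi\colon\pi_1(M_1)\to\pi_1(M_2)$. Since $M_2$ is aspherical, $\bar\phi$ is realised by a map $\bar f\colon M_1\to M_2$, unique up to homotopy. Injectivity on the fiber gives $\phi(t_1)=t_2^k$ with $k\neq0$.

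\textbf{Step 2: compare extension classes.} The central extension $1\to\Z\to G_i\to\pi_1(M_i)\to1$ is classified by the Euler class $e_i$, viewed in $H^2(\pi_1(M_i);\Z)=H^2(M_i;\Z)$. The existence of the homomorphism $\phi$ of extensions, equal to multiplication by $k$ on the kernels, says exactly that $H^2(\bar f)(e_2)=k\,e_1$. This is the standard functoriality of extension classes: pull back the extension along $\bar\phi$ and push forward the extension along $\times k$. Both yield the same extension of $\pi_1(M_1)$ by $\Z$.

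\textbf{Step 3: build the fiber-preserving map.} The pullback bundle $\bar f^*E_2$ has Euler class $k e_1$. The fiberwise $k$-fold map $E_1\to E_1/\Z_k$ gives a bundle with Euler class $k e_1$, which is therefore isomorphic to $\bar f^*E_2$. Composing, we get a fiber-preserving map $g\colon E_1\to \bar f^*E_2\to E_2$ covering $\bar f$ and of degree $k$ on fibers.

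\textbf{Step 4: match the homotopy class, the main obstacle.} The homomorphisms $g_*$ and $\phi$ agree on $Z_1$ and induce the same map on quotients, but they need not be conjugate. They differ by a crossed homomorphism $\pi_1(M_1)\to Z_2$; since $Z_2$ is central, this is just a homomorphism $\delta\colon\pi_1(M_1)\to\Z$, i.e. a class in $H^1(M_1;\Z)$. To remove it, I would modify $g$ by a gauge transformation: the principal $S^1$-action lets us twist fiberwise by a map $M_1\to S^1$ representing $\delta$. Concretely, $g'(x)=g(x)\cdot u(\pi(x))$, where $u\colon M_1\to S^1$ realises $\delta$. Then $g'$ is still fiber-preserving, and $g'_*=\phi$ up to conjugation. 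The care needed is in checking that this twist changes $g_*$ by precisely $\delta$, including basepoint and orientation conventions, and that the isomorphism in Step 3 can be chosen compatibly with $\phi$ on the fiber. Once $g'_*$ and $f_*$ are conjugate, asphericity of $E_2$ gives $g'\simeq f$, which completes the proof.
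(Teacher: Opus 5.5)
The survey only quotes this theorem from NSTWW2 and gives no proof of it, so there is no argument in the text to compare yours against. Judged on its own, your proposal is correct and essentially complete.

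The step you flag as the main obstacle does go through. Conjugate $g_*$ by a lift of the element of $\pi_1(M_2)$ that conjugates $\bar f_*$ to $\bar\phi$; this does not change $g_*$ on $Z_1$, since $Z_2$ is central. Then $\delta(x)=\phi(x)g_*(x)^{-1}$ is a homomorphism $G_1\to Z_2$ vanishing on $Z_1$. For the action map $\mu\colon E_2\times S^1\to E_2$ one has $\mu_*(a,n)=a\,t_2^{\,n}$. Hence $g'=\mu\circ(g,u\circ\pi)$ satisfies $g'_*=g_*\cdot\delta$, which equals $\phi$ up to that conjugation once $u$ is normalised to take the value $1$ at the base point.

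Two points deserve an explicit sentence:
\begin{itemize}
\item The bundles must be oriented, i.e.\ principal, circle bundles. This is automatic here since all total spaces and bases are oriented, and it is what makes $Z_2$ central and $\delta$ an honest homomorphism rather than a crossed one.
\item For $k<0$ the bundle in Step 3 should be $E_1\times_{S^1}S^1$ with $S^1$ acting on the second factor through $z\mapsto z^k$, so that its Euler class is $ke_1$.
\end{itemize}
The universal sign relating the extension class to the Euler class is harmless, since the same convention is used in Steps 2 and 3.

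There is also a much shorter route. Write $p_i\colon E_i\to M_i$ for the projections. The maps $p_2\circ f$ and $\bar f\circ p_1$ into the aspherical $M_2$ induce conjugate homomorphisms $G_1\to\pi_1(M_2)$, hence are homotopic. Lifting this homotopy through the fibration $p_2$, starting at $f$, gives $f'\simeq f$ with $p_2\circ f'=\bar f\circ p_1$, so $f'$ sends each fiber into a fiber. This bypasses Steps 2--4 and uses only $\phi(Z_1)\subseteq Z_2$, not injectivity. However, on each fiber it only yields some map of degree $k$.

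Your construction gives more. The resulting map is a genuine bundle map up to a gauge twist: on each fiber it is a $k$-fold covering followed by a rotation. Along the way you also prove $H^2(\bar f)(e_2)=ke_1$, which is exactly the relation in the description of $D_{FP}(E_1,E_2)$ in Theorem~\ref{main1}.
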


From the above theorem we conclude the following:

\begin{cor}\cite[Corollary 4.1]{NSTWW2}\label{fiber3}
For $i=1,2$, let 
$E_i$ be an $S^1$-bundle over an aspherical $n$-manifold $M_i$,   and let $f\colon   E_1\to  E_2$ be a map of non-zero degree.
If each finite index subgroup of  $\pi_1(M_2)$ is center-free,
 then $f$ is homotopic to a fiber-preserving map. As a consequence, we have 
$$D_{FP}(E_1 ,E_2)=  D(E_1, E_2).$$
\end{cor}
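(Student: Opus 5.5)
Given a map $f\colon E_1\to E_2$ of non-zero degree, the plan is to reduce to Theorem \ref{main2}. That is, we show that $f_*\colon\pi_1(E_1)\to\pi_1(E_2)$ sends the fiber subgroup $\langle t_1\rangle\cong\Z$ of $\pi_1(E_1)$ injectively into the fiber subgroup $\langle t_2\rangle$ of $\pi_1(E_2)$. Since $M_i$ is aspherical, the long exact sequence of the fibration gives the central extensions
\[
1\to\langle t_i\rangle\to\pi_1(E_i)\to\pi_1(M_i)\to1 .
\]
In particular $\pi_1(E_i)$ is torsion-free and $E_i$ is aspherical. The fiber $t_1$ is central in $\pi_1(E_1)$.

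\textbf{Step 1: the fiber goes into the fiber.} By Lemma \ref{l:fund}, the image $H:=f_*(\pi_1(E_1))$ has finite index in $\pi_1(E_2)$. Let $p\colon\pi_1(E_2)\to\pi_1(M_2)$ be the projection. Then $p(H)$ has finite index in $\pi_1(M_2)$. The element $f_*(t_1)$ is central in $H$, so $p(f_*(t_1))$ is central in $p(H)$. By hypothesis $p(H)$ is center-free, hence $p(f_*(t_1))=1$, i.e. $f_*(t_1)\in\langle t_2\rangle$.

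\textbf{Step 2: injectivity on the fiber.} This is the main obstacle. Suppose instead $f_*(t_1)=1$. Since the $E_i$ are aspherical, $f$ is determined up to homotopy by $f_*$. Then $f_*$ factors through $\pi_1(M_1)$, so $f$ is homotopic to a composite $E_1\to M_1\to E_2$ (realising the homomorphism $\pi_1(M_1)\to\pi_1(E_2)$ by a map of $K(\pi,1)$'s). This composite factors through the $n$-dimensional manifold $M_1$, and $H_{n+1}(M_1)=0$. Hence $\deg(f)=0$, a contradiction. So $f_*(t_1)=t_2^k$ with $k\neq0$, and since $\langle t_2\rangle\cong\Z$ is torsion-free, $f_*$ is injective on $\langle t_1\rangle$. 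The point to check carefully is that the factorisation through $\pi_1(M_1)$ is realised by an honest map from $M_1$; this holds because $E_2$ is aspherical and $M_1$ is a CW complex.

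\textbf{Step 3: conclusion.} Theorem \ref{main2} now applies, so $f$ is homotopic to a fiber-preserving map of the same degree. This gives $D(E_1,E_2)\subseteq D_{FP}(E_1,E_2)$. The reverse inclusion is trivial, since $0$ is realised in both sets, for instance by a constant map.
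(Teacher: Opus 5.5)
Your proof is correct and follows the intended route. The survey simply deduces the corollary from Theorem~\ref{main2}, and you supply the needed verification: the fiber lands in the fiber because its image is central in the finite-index subgroup $f_*(\pi_1(E_1))$, whose projection to $\pi_1(M_2)$ is center-free; and it is not killed, since otherwise $f$ would factor up to homotopy through the $n$-manifold $M_1$ and have degree zero.

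One small expository slip: the inclusion $D_{FP}(E_1,E_2)\subseteq D(E_1,E_2)$ is trivial because fiber-preserving maps are maps. The constant map is what puts $0$ into $D_{FP}(E_1,E_2)$, and that is needed for the other inclusion.
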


With the strong machinery provided by the above results, we would now like to apply the same argument as in the proof of part (c) of Theorem \ref{t:main}. Below, we will denote a circle bundle over an $n$-manifold $M$ with Euler class  $a\in H^2(M; \Z)$ by $\tilde M_a$. What we need is the following result in dimensions four and five:

\begin{thm}\cite[Theorem 1.4]{NSTWW2}\label{0,k}
For each non-zero integer $k$ and  $n=4, 5$,  there exist aspherical  $n$-manifolds of the form $\tilde N_b,\tilde N_{kb}$ such that  
$$D(\tilde N_b,\tilde N_{kb})=\{0, k\}.$$
 \end{thm}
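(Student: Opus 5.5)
We will construct $\tilde N_b$ and $\tilde N_{kb}$ as circle bundles over a suitable aspherical base $N$ of dimension $n-1\in\{3,4\}$ and compute their mapping degree set through Corollary \ref{fiber3} and Theorem \ref{main1}. The plan is to choose the base $N$ with three properties. First, $N$ is aspherical and every finite index subgroup of $\pi_1(N)$ is center-free; a closed hyperbolic $3$-manifold works for $n=4$, and for $n=5$ a closed hyperbolic $4$-manifold, or a product of a hyperbolic surface with another hyperbolic surface. Second, $D(N,N)=\{0,\pm1\}$, and every non-zero degree self-map of $N$ is homotopic to an isometry, by Mostow rigidity together with Gromov's simplicial volume bound (Lemma \ref{l:functorial}). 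Third, $H^2(N;\Z)$ contains a primitive class $b$ of infinite order such that $\phi^*(b)\neq -b$ and $\phi^*(b)\ne b$ for every nontrivial isometry $\phi$ of $N$.

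Given such $N$, Corollary \ref{fiber3} yields $D(\tilde N_b,\tilde N_{kb})=D_{FP}(\tilde N_b,\tilde N_{kb})$. Theorem \ref{main1} then identifies this set with
\[
\{0\}\cup\{m\cdot\deg(f) \mid m\neq0,\ \deg(f)\neq0,\ H^2(f)(kb)=mb\}.
\]
Since $\deg(f)=\pm1$ and $f$ is homotopic to an isometry $\phi$, we have $H^2(f)(kb)=k\,\phi^*(b)$. This equals $mb$ only when $\phi^*(b)=\pm b$ (using that $b$ is primitive of infinite order). By the third property this forces $\phi$ to act trivially on $b$, so $m=k$. If we arrange moreover that the only isometry fixing $b$ is orientation preserving, then $\deg(f)=1$ and the degree set is exactly $\{0,k\}$. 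The identity map supplies the degree $k$ via the $k$-fold fiberwise cover $\tilde N_b\to\tilde N_{kb}$.

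The orientation issue can instead be handled by taking $N$ with no orientation-reversing self-homotopy equivalence, for instance a hyperbolic manifold with trivial isometry group. In that case $\deg(f)=1$ always and $H^2(f)=\mathrm{id}$, which gives $m=k$ immediately. The main obstacle is producing such a base with $b_2(N)>0$ in dimensions $3$ and $4$, which requires exhibiting hyperbolic manifolds that are asymmetric and have non-trivial $H^2$. For $n=4$, I would first try a hyperbolic $3$-manifold with trivial isometry group and positive first Betti number, obtained by Dehn filling or as a fibered example. For $n=5$, where closed hyperbolic $4$-manifolds are scarce, I would fall back on a product $\Sigma\times\Sigma'$ of hyperbolic surfaces of distinct genera. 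There self-maps of non-zero degree split by Lemma \ref{l:homol} and simplicial volume arguments, and the class $b$ is taken to be a mixed class in $H^1(\Sigma)\otimes H^1(\Sigma')$ chosen generic enough that no mapping class pair fixes it up to sign except those acting trivially.
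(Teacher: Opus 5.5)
Your reduction is the paper's own: Corollary~\ref{fiber3} and Theorem~\ref{main1} turn the problem into a statement about degree-one self-maps of the base and how they act on $b$. This is exactly Proposition~\ref{appl2} with $m=1$. What remains is the existence of a suitable base, which the paper isolates as Proposition~\ref{0,k1}, and that is where your proposal has a genuine gap. For $n=4$ you only say what you would try. For $n=5$, the construction you actually commit to fails.

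Let $\iota$ be a hyperelliptic involution of $\Sigma$. It is orientation preserving and acts by $-1$ on $H^1(\Sigma;\Z)$. So $\iota\times\mathrm{id}$ is a degree-one self-homeomorphism of $\Sigma\times\Sigma'$ with $(\iota\times\mathrm{id})^*(x\times y)=-x\times y$, and it sends every class of $H^1(\Sigma)\otimes H^1(\Sigma')$ to its negative. For any mixed $b$ this gives $H^2(\iota\times\mathrm{id})(kb)=(-k)b$, and Theorem~\ref{main1} then produces a fiber-preserving map $\tilde N_b\to\tilde N_{kb}$ of degree $-k$. Hence $D(\tilde N_b,\tilde N_{kb})\supseteq\{0,k,-k\}$, however generic $b$ is. (Products of surfaces also always have degree $-1$ self-maps, so they never satisfy the paper's condition $D(N,N)=\{0,1\}$.)

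What you need instead is an actual existence result. For example, take closed hyperbolic $3$- and $4$-manifolds with trivial isometry group and $b_2>0$. By Mostow rigidity $\mathrm{Out}(\pi_1N)=1$, so every degree-one self-map is homotopic to the identity and there is no degree $-1$ self-map. Then any non-torsion $b$ works, since $kb=mb$ forces $m=k$.

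Asymmetric closed hyperbolic manifolds exist in every dimension by the Belolipetsky--Lubotzky construction. In dimension $4$ you have $b_2=\chi(N)-2+2b_1(N)$ with $\chi(N)$ proportional to volume, so $b_2>0$ is automatic for large volume. In dimension $3$ you need $b_2=b_1>0$, and this has to be arranged, e.g.\ from a surjection of $\pi_1N$ onto a free group.

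Two smaller repairs are also needed.
\begin{itemize}
\item Passing from ``degree $\pm1$'' to ``homotopic to an isometry'' requires three facts before Mostow rigidity applies: degree-one maps are $\pi_1$-surjective, $\pi_1N$ is Hopfian, and $N$ is aspherical.
\item The equation $k\phi^*(b)=mb$ only yields $\phi^*(b)\equiv\pm b$ modulo torsion, and $H^2(N;\Z)\cong H_1(N;\Z)$ typically has torsion for hyperbolic $3$-manifolds. So your third property must be imposed modulo torsion, or replaced by the paper's condition $H^2(f)(b)=b$ for every degree-one $f$.
\end{itemize}
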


The latter theorem is a consequence of the next two results. First, there exist manifolds (the bases of the desired circle bundles) with the following properties:

\begin{prop}\cite[Prop. 4.4]{NSTWW2}\label{0,k1}
 In dimensions three and four, there exists aspherical manifolds $N$ such that:
\begin{itemize}
\item[(i)] every finite-index subgroup of  $\pi_1(N)$ is center-free, 
\item[(ii)] $D(N, N)=\{0,1\}$, 
\item[(iii)] there is a non-torsion class $b\in H^2(N;\mathbb{Z})$ such that $H^2(f)(b)=b$ for any degree one map $f\colon N\to N$.
\end{itemize}
 \end{prop}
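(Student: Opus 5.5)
We need aspherical manifolds $N$ in dimensions three and four satisfying (i)--(iii). The plan is to take a product $N=\Sigma\times X$ in dimension three (with $X=S^1$) or four (with $X=\Sigma'$), or a suitable closed hyperbolic manifold. Condition (i) fails for products with a circle factor, and for hyperbolic $3$-manifolds $H^2$ can vanish. I will therefore look for $N$ among closed hyperbolic manifolds, or among products of two hyperbolic surfaces in dimension four, and check each property.

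\textbf{Dimension three.} I would take a closed hyperbolic $3$-manifold $N$ with $b_1(N)>0$, so that $H^2(N;\Z)\cong H_1(N;\Z)$ has a non-torsion class. Property (i) holds because finite-index subgroups of $\pi_1(N)$ are again torsion-free non-elementary hyperbolic groups, which are center-free. For (ii), Gromov's simplicial volume bound (Lemma \ref{l:functorial}) gives $|\deg f|\le 1$ for self-maps, since $\|N\|>0$. Degree $-1$ is excluded if $N$ admits no orientation-reversing self-homotopy equivalence. By Mostow rigidity every degree $\pm1$ self-map is homotopic to an isometry, so I choose $N$ with isometry group trivial or consisting only of orientation-preserving isometries. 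Then $D(N,N)=\{0,1\}$.

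For (iii), any degree one map $f$ is homotopic to an orientation-preserving isometry. If $\mathrm{Isom}^+(N)$ is trivial, then $f\simeq \mathrm{id}$ and $H^2(f)(b)=b$ for every $b$. So the key existence claim is a closed hyperbolic $3$-manifold with $b_1>0$ and trivial isometry group. I would obtain this by citing the fact that generic hyperbolic Dehn fillings on asymmetric cusped manifolds, or random constructions, are asymmetric, with positive $b_1$ arranged through the filling slopes or by passing to a suitable cover. Alternatively, one can avoid needing trivial $\mathrm{Isom}(N)$: it is enough to average. If $G=\mathrm{Isom}^+(N)$ is finite, pick any non-torsion class $c$ and set $b=\sum_{g\in G}g^*c$. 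This $b$ is $G$-invariant, and it is non-torsion provided the averaged class is nonzero rationally. That holds if some non-torsion class is not killed by averaging, i.e.\ if $H^2(N;\Q)^G\neq 0$, equivalently $b_1(N/G)>0$. Arranging this, for instance by taking $N$ a cover of an orbifold with positive first Betti number, is the step I expect to be the main obstacle.

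\textbf{Dimension four.} I would take $N$ a closed hyperbolic $4$-manifold, or a closed quotient of $\mathbb{H}^2\times\mathbb{H}^2$, or a product $\Sigma_g\times\Sigma_h$ with $g\neq h$ both at least $2$. For the product, (i) holds since finite-index subgroups of a product of two centerless hyperbolic surface groups are center-free. For (ii), positivity of the simplicial volume gives $|\deg|\le1$. The harder part is excluding degree $-1$, and the flips and mapping class group actions that spoil (iii). For this reason I prefer an irreducible lattice quotient or a hyperbolic $4$-manifold, where Mostow rigidity again reduces degree $\pm1$ self-maps to a finite isometry group. The argument then proceeds exactly as in dimension three: choose $N$ whose isometries all preserve orientation, and use the averaged invariant class $b$, which is non-torsion once $H^2(N;\Q)^{G}\ne0$. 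In dimension four this is easier, since the intersection form and the Euler class give natural invariant classes, and $b_2>0$ for such $N$.
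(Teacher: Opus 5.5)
\paragraph{Verdict.} There is a genuine gap: the proposition is an existence statement, and your proposal never actually produces the manifolds $N$. (The survey itself only cites the source for this result and gives no proof, so the proposal has to stand on its own.)

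\paragraph{What is correct.} Your reduction is the right one. For a closed hyperbolic $N$ of dimension $3$ or $4$, property (i) holds, and $\|N\|>0$ forces $|\deg f|\le 1$. A self-map of degree $\pm1$ is $\pi_1$-surjective, hence an isomorphism on $\pi_1$: $\pi_1(N)$ is residually finite, hence Hopfian. You skip this step, and Mostow rigidity alone does not apply to an arbitrary degree-one map. By asphericity the map is then a homotopy equivalence, hence homotopic to an isometry. So if $\mathrm{Isom}(N)$ is trivial, then $D(N,N)=\{0,1\}$ and every degree one self-map is homotopic to $\mathrm{id}$. Then (iii) holds for \emph{every} class, and all you need is $b_2(N)>0$.

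\paragraph{Dimension three.} You only gesture at Dehn filling or ``random constructions''. ``Passing to a suitable cover'' works against you, since a regular cover acquires its deck group as isometries. Keeping $b_1>0$ is a non-generic condition on filling slopes, and you do not address it. The averaging variant you leave open yourself.

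\paragraph{Dimension four.} The reasons you give are wrong.
\begin{itemize}
\item $\Sigma_g\times\Sigma_h$ is not merely harder but excluded: $r\times\mathrm{id}$, with $r$ an orientation-reversing diffeomorphism of $\Sigma_g$, has degree $-1$.
\item The Euler class of a $4$-manifold lies in $H^4$, and the intersection form is not a cohomology class. Neither yields an invariant element of $H^2(N;\Q)$.
\item $b_2>0$ does not imply $H^2(N;\Q)^G\neq 0$.
\item You give no way to produce a real hyperbolic or $\mathbb{H}^2\times\mathbb{H}^2$-quotient $4$-manifold all of whose isometries preserve orientation. The signature vanishes for both, so there is no cheap obstruction to degree $-1$.
\end{itemize}

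\paragraph{The missing ingredient.} You need a realisation theorem for isometry groups. Belolipetsky--Lubotzky proved that for every $n\ge 2$ and every finite group $G$ there are infinitely many closed hyperbolic $n$-manifolds with $\mathrm{Isom}(N)\cong G$; Kojima did $n=3$ earlier. Take $G$ trivial and $n=3,4$; then no averaging or orientation bookkeeping is needed. What remains is the Betti number.
\begin{itemize}
\item In dimension four, $b_2(N)=\chi(N)-2+2b_1(N)\ge \chi(N)-2$, with $\chi(N)=\frac{3}{4\pi^2}\mathrm{vol}(N)$. By Wang's finiteness theorem, infinitely many pairwise distinct closed hyperbolic $4$-manifolds have unbounded volume, so $b_2>0$ for all but finitely many of them.
\item In dimension three you must still secure $b_1(N)>0$. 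This holds for the Belolipetsky--Lubotzky manifolds, whose fundamental groups surject onto non-abelian free groups by construction. Alternatively, take long Dehn fillings of an asymmetric cusped manifold with $b_1$ exceeding the number of cusps. These fillings stay asymmetric because the core curves become the unique shortest geodesics, so any isometry restricts to the cusped manifold and is forced to be trivial.
\end{itemize}
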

 
 Indeed, with the manifolds given by Proposition \ref{0,k1} we obtain Theorem \ref{0,k} by the following general result:

 \begin{prop}\cite[Prop. 4.3]{NSTWW2}\label{appl2}
 Suppose $N$ is an aspherical $n$-manifold satisfying (i)--(iii) in Proposition \ref{0,k1}.  Then for any non-zero integers $k$ and $m$, we have
 $$
D(\tilde N_{mb},\tilde N_{kb})= \left\{\begin{array}{ll}
       \{0,k/m\}, & \text{if} \ m\mid k \\
       \text{} & \\
        \{0\}, & \text{if} \ m\nmid k.
        \end{array}\right.
$$ 
\end{prop}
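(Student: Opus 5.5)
The plan is to reduce everything to fiber-preserving maps and then read off the degrees from Theorem \ref{main1}. The base $N$ is aspherical and, by (i), every finite index subgroup of $\pi_1(N)$ is center-free. So Corollary \ref{fiber3} applies to $E_1=\tilde N_{mb}$ and $E_2=\tilde N_{kb}$, and gives
\[
D(\tilde N_{mb},\tilde N_{kb})=D_{FP}(\tilde N_{mb},\tilde N_{kb}).
\]
Theorem \ref{main1} with $M_1=M_2=N$, $e_1=mb$, $e_2=kb$ then describes this set as $\{0\}$ together with all products $\ell\cdot\deg(f)$, where $\ell\neq0$, $f\colon N\to N$ has $\deg(f)\neq0$, and $H^2(f)(kb)=\ell mb$.

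Next I use (ii), $D(N,N)=\{0,1\}$, so every $f$ that contributes has $\deg(f)=1$. Property (iii) then gives $H^2(f)(b)=b$, and hence $H^2(f)(kb)=kb$. The condition $H^2(f)(kb)=\ell mb$ becomes $kb=\ell mb$, that is, $(k-\ell m)b=0$ in $H^2(N;\Z)$. Since $b$ is non-torsion, this forces $k=\ell m$. If $m\nmid k$, no such $\ell$ exists and the set is $\{0\}$. If $m\mid k$, the only candidate is $\ell=k/m$, which is non-zero because $k\neq0$.

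It remains to show that $k/m$ is actually realised when $m\mid k$. For this I take $f=\mathrm{id}_N$, which has degree $1$, and $\ell=k/m$; then $H^2(\mathrm{id})(kb)=kb=\ell mb$. Theorem \ref{main1} therefore produces a fiber-preserving map of degree $\ell\cdot1=k/m$. Concretely, this should be the fiberwise map that wraps each circle $\ell$ times, covering the identity, so one can also check directly that it exists. Either way the set equals $\{0,k/m\}$.

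The main obstacle is the first step, namely verifying that Corollary \ref{fiber3} applies to every non-zero degree map $\tilde N_{mb}\to\tilde N_{kb}$, so that no map which fails to preserve fibers up to homotopy contributes an extra degree. The hypotheses are exactly (i) together with asphericity of $N$. So it amounts to checking that the corollary is stated for arbitrary maps of non-zero degree, which it is, and that the bundles are taken over the same aspherical base. Everything after that step is bookkeeping with the formula of Theorem \ref{main1}.
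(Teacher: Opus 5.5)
Your proposal is correct and matches the intended argument: hypothesis (i) and asphericity of $N$ let Corollary \ref{fiber3} reduce the problem to $D_{FP}$. Theorem \ref{main1}, combined with (ii), (iii) and the non-torsion of $b$, then forces $k=\ell m$ with degree $\ell=k/m$, and this degree is realised over $\mathrm{id}_N$.
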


\bibliographystyle{amsalpha}

\end{document}